\newcommand{\R}{\mathbb{R}}
\newtheorem{theorem}{Theorem}[section]
\newtheorem{proposition}{Proposition}[theorem]
\begin{document}

\date{\today}

\title{High order Nystr\"om methods for transmission problems for Helmholtz equation}

 \author{V\'{\i}ctor Dom\'{\i}nguez\footnote{Departamento de Ingenier\'{\i}a Matem\'{a}tica e Inform\'{a}tica, Universidad P\'{u}blica de Navarra. Avda
  Tarazona s/n, 31500 Tudela, Spain.  Email: {\tt victor.dominguez@unavarra.es}} \and Catalin Turc\footnote{Department of Mathematical Sciences and Center for Applied Mathematics, NJIT, Univ. Heights. 323
  Dr. M.L. King Jr. Blvd, Newark, NJ 07102, USA. Email: {\tt catalin.c.turc@njit.edu}}}

%
%
\maketitle

\abstract{We present superalgebraic compatible Nystr\"om discretizations for the four Helmholtz boundary operators of Calder\'{o}n's calculus on smooth closed curves in 2D. These discretizations are based on appropriate  splitting   of the kernels  combined with very accurate product-quadrature rules for the different singularities that such kernels present. A Fourier based analysis shows that the four discrete operators converge to the continuous ones in appropriate Sobolev norms. This proves that Nystr\"om discretizations of many popular integral equation formulations for Helmholtz equations are stable and convergent. The convergence is actually superalgebraic for smooth solutions.}

%
%

\section{Introduction}

The design of robust discretizations of the boundary integral equations in 2D has been an active research topic in the last decades.  The analysis of Galerkin discretizations of boundary integral equations is by now well understood in the case of smooth boundaries and boundary data. Indeed, their stability can be established based on the coercivity of the principal parts of the boundary integral operators featured in the integral formulations (a first result along these lines can be traced back to \cite{Nedelec02}), and compact perturbation analysis arguments. On the other hand, although Nystr\"om/collocation methods are simpler to implement, their analysis is somewhat more complicated. Given that for  2D  problems boundary integral operators can be thought of as periodic pseudodifferential operators, the analysis of discretization schemes for boundary integral equations relies on Fourier analysis. Galerkin as well as Nystr\"om/collocation methods for periodic integral equations have been fully analyzed for many periodic integral equations and these techniques have been also used to  derive new methods as 
 qualocation schemes,  cf. \cite{saranen_vainiko} and references therein.


Boundary integral formulations of Helmholtz equations in a certain domain rely on single and double layer acoustic potentials and their Dirichlet and Neumann traces on the boundary of that domain. These traces lead to the natural definition of four boundary integral operators which are referred to as the Helmholtz boundary integral operators of Calderon's calculus. In this paper we focus on Nystr\"om methods based on suitable quadrature rules for the discretization  of the four Helmholtz boundary integral operators that feature in Calderon's calculus. These provide a means of defining fully discrete versions of these operators which can be used easily to discretize complicate formulations involving rather complex compositions of different boundary operators. Moreover, these discretizations can be easily used in conjunction with iterative solvers based on Krylov subspace methods. 

%

The aim of this paper is not to propose new discretizations of the Helmholtz boundary integral operators. Actually, most of those considered here can be found and have been thoroughly analyzed in the literature, mostly by Kress (cf \cite{kressH, KressLI} and references therein). Our objective  is therefore different: we want to propose {\em compatible} discretizations of the  four Helmholtz boundary integral operators that lead to superalgebraic schemes for most of the boundary integral formulations of the Helmholtz equation in 2D. 

Helmholtz transmission problems for smooth interfaces provide a sufficiently complex environment for testing our discretizations as they feature all of the four Helmholtz boundary integral operators in Calderon's calculus. Discretizations of integral formulations of other types of boundary conditions can be readily produced and analyzed with the methods we present in this paper.

 Some of the formulations considered in this paper are direct, i.e. the unknowns are physical quantities of the problem (typically the trace and the normal derivative of the solution), others are indirect. Some of the indirect formulations considered in this text could be more economical from a computational point of view. Besides, some more sophisticated integral formulations lead to  matrices with clustered eigenvalues, which usually ensures a faster convergence of Krylov methods such as GMRES. Demanding better spectral properties requires working with more complex formulations whose discretization could seem challenging at first sight. We will show that the discrete boundary layer operators can be used as black boxes in such a way that the discretization of any integral formulation, however complicated, is in fact straightforward. Moreover, for smooth data, we prove that the numerical solutions  converge superalgebraically, that is, faster than any negative power of $N$, the number of degrees of freedom.

The paper is structured as follows: in Section 2 we discuss briefly the Helmholtz transmission problem and introduce the boundary layer potentials and operators for the Helmholtz equation. In Section 3 we reformulate these mappings as integral operators acting on spaces of $2\pi-$periodic functions via a parameterization of the interface. {We present also  their numerical discretizations and analyze their convergence. We next introduce compatible discretizations of the operators and derive convergence estimates in Sobolev setting}. We conclude by showing in Section 4 how these  compatible discretizations can be applied to solve numerically several boundary integral formulations of the original Helmholtz transmission problem. Well-posedness and convergence estimates are derived for the integral equations considered in this paper. Some numerical experiments are presented {in  the final Section 5}.

\section{Helmholtz transmission problems and boundary integral operators}

We start introducing the domain of the transmission problem (see Figure \ref{fig:domain}).
Let $D^-$ be a compact domain with smooth boundary $\Gamma$ which for simplicity we will assume to be simply connected. Denote also $D^{+}:=\mathbb{R}^2\setminus \overline{D^-}$. We will write $\gamma$ for the trace operator and  $\partial_n$ for the unit normal derivative on $\Gamma$ pointing toward $D^{+}$. Given two wavenumbers $k_+, k_-$ that are complex numbers with non-negative imaginary part, we consider the following Helmholtz transmission problem:

\begin{figure}[t]
\centerline{
 \includegraphics[width=0.6\textwidth]{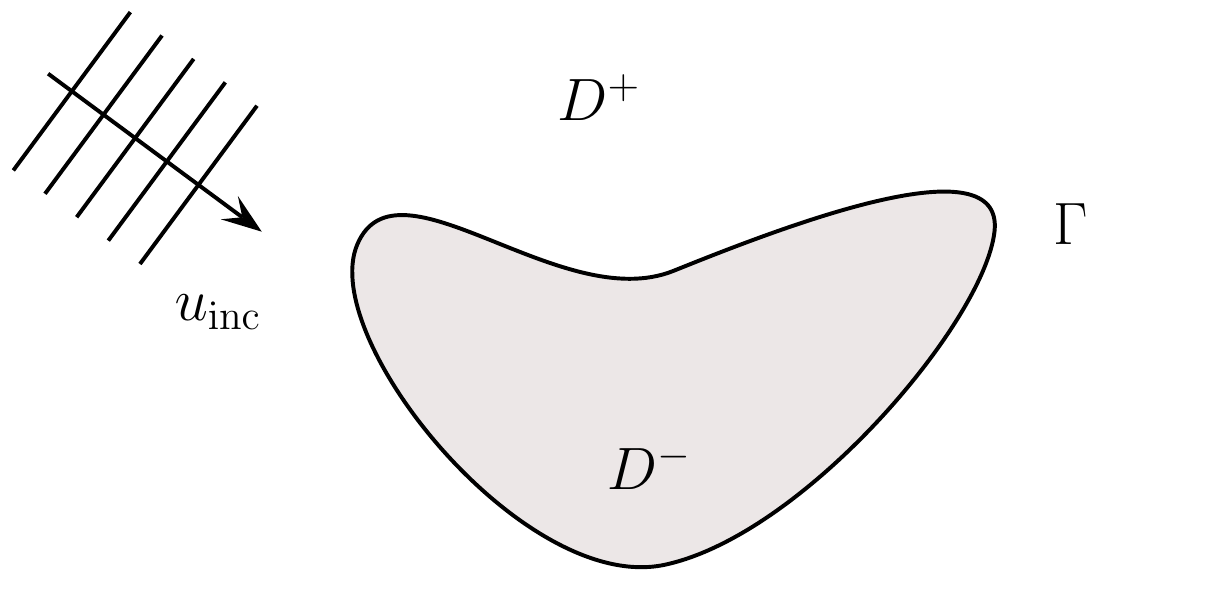}}
 \caption{\label{fig:domain}Sketch of the domain of the transmission problem}
\end{figure}
 
\begin{equation}\label{eq:Tr}
 \begin{array}{rcl}
  \displaystyle \Delta u^{\rm +}+k_{+}^2u^{\rm +}\!\!\!&=&\!\!\!0 \qquad \mathrm{in}\ D^{+}
\\
  \displaystyle   \Delta u^-+ k_-^2 u^-\!\!\!&=&\!\!\!0\qquad \mathrm{in}\ D^{-}\\
   \displaystyle {\gamma} u^{\rm +}\!- {\gamma} u^-\!\!\!&=&\!\!\!-{ u}_{\rm inc}, \  
  \displaystyle {\partial_n} u^{\rm +} \!-\!
  \displaystyle  {\nu}{ {\partial_n}} u^-=
-\partial_n u_{\rm inc} \\
  \displaystyle  \partial_r u^{+}\!-\!{\rm i}k_+ u^{+}\!\!\!&=&\!\!\!o(|{\bf r}|^{-1/2}). 
\end{array}
\end{equation}
Here  $\partial_r$ is the partial derivative on the radial direction and $u_{\rm inc}$ is an incident wave that is a solution of the 
Helmholtz problem for $k_+$ on a neighborhood of $\overline{D^-}$. We assume that the transmission problem above together with its adjoint, that is the transmission problem defined by taking  $k_\pm$ in $D^{\mp}$, are uniquely solvable. For instance, if $k_\pm$ are real and $\nu>0$ these hypotheses are known to be satisfied. We refer to \cite{KlMa} for more comprehensive sets of values of $k_\pm$ and $\nu$ fulfilling these hypotheses. 

Let 
\[
 \Phi_k({\bf x}):=\frac{\rm i}{4} H_0^{(1)}(k|{\bf x}|)
\]
($H_0^{(1)}$ is the Hankel function of first kind and order 0) 
be the outgoing fundamental solution of the Helmholtz equation in $\mathbb{R}^2$. The single and double layer operators are defined as follows
\begin{equation}\label{eq:pot}
 {SL}_k \varphi 
 := \int_\Gamma \Phi_k(\:\cdot\:-\mathbf{y})\varphi(\mathbf{y}){\rm d}\sigma(\mathbf{y}),
 \qquad
 { DL}_k g:= 
 \int_\Gamma \frac{\partial \Phi_k(\:\cdot\:-\mathbf{y})}{\partial\mathbf{n}(\mathbf{y})}g(\mathbf{y}){\rm d}\sigma(\mathbf{y}).
\end{equation}
We stress that for any density, the layer operators define solutions of the Helmholtz equation in $\mathbb{R}^2\setminus\Gamma$ which satisfy, in addition, the radiation condition at infinity (last condition in \eqref{eq:Tr}). Moreover, the third Green formula states 
\begin{equation}\label{eq:green}
 u^{\pm}=\mp \: { SL}_{k_{\pm}}\partial_n u_{\pm}\:\: \pm \:\:{DL}_{k_{\pm}}\:\gamma u_{\pm}.
\end{equation}
Let us denote by $\gamma^\pm, \partial_n^\pm$ the trace and respectively the normal derivative taken from $D^{\pm}$. We have then the jump properties 
\begin{equation}\label{eq:bl}
\begin{aligned}
 \gamma^\pm  { SL}_k &=V_k &
 \partial_n^\pm  { SL}_k &=\mp \tfrac12 I+K^\top_k\\
 \gamma^\pm  { DL}_k &=\pm \tfrac12 I+K^\top_k &
 \partial_n^\pm  { DL}_k &= H_k
\end{aligned}
\end{equation}
where $I$ denotes the identity, $V_k$ is the single layer operator, 
$K_k$ and $K_k^\top$ are the double layer and 
adjoint double layer operator, and $H_k$ is the hypersingular operator. 

We can now proceed as follows: (a) we can use \eqref{eq:bl} and the transmission conditions
stated in \eqref{eq:Tr} to compute the Cauchy data of the solution $(u^\pm,
\partial_n^\pm u)$ and reconstruct these functions using~\eqref{eq:green}; (b) we can try to write the $u^\pm$ in terms of some unknown densities associated with the potentials \eqref{eq:pot} and solve for these densities via  equations obtained from \eqref{eq:bl}. Approach (a) leads to  the so-called direct methods whereas schemes obtained from (b) are known as indirect methods.

\section{Associated periodic integral operators and their approximation}

\subsection{Periodic integral operators}

Let us consider a smooth regular $2\pi-$periodic parameterization of the curve $\Gamma$ given by ${\bf x}:\R\to\Gamma$.  First we set the transmission data
\begin{equation}\label{eq:rhs}
h(s):=-\big(\gamma_\Gamma u_{\rm inc}\circ{\bf x}\big)(s),\quad 
\eta(s):=-\big(\partial_n u_{\rm inc}\circ{\bf x}\big)(s)\,|{\bf x}'(s)|. 
\end{equation}
We follow the same rule to reformulate layer potentials and boundary integral operators as $2\pi-$periodic integral operators in the following sense: for ${SL}_k$ and the associated boundary integral operators $V_k$ and $K_k^\top$, the norm of the parameterization $|{\bf x}'(t)|$ ({$t$ is the integration variable}) is incorporated  in the density function $\varphi$ in \eqref{eq:pot}, whereas
for ${DL}_k$, and the corresponding boundary integral operators $K_k$ and $H_k$ this term is incorporated in the kernels of these operators.  In addition, the operators $K_k^\top$ and $H_k$  are  multiplied by $|{\bf x}(s)|$, where $s$ {will be used  henceforth} as the variable corresponding to the target point in all of the integral operators considered in this text. With these conventions, we write the single,   double and  adjoint double layer operator as follows 
\begin{eqnarray}
{[}{\rm V}_k\varphi{]}(s)&=&\int_0^{2\pi} A(s,t)\log\sin^2\tfrac{s-t}{2}\varphi(t){\rm d}t
\label{eq:Vk}
+\int_0^{2\pi} B(s,t)\varphi(t){\rm d}t\\
{[}{\rm K}_k g{]}(s)&=&\int_0^{2\pi} C(s,t)\sin^2\tfrac{s-t}{2}\log\sin^2\tfrac{s-t}{2}g(t){\rm d}t
+\int_0^{2\pi} D(s,t)g(t){\rm d}t
\label{eq:Kk}
\\
{[}{\rm K}^\top_k \varphi{]}(s)&=&\int_0^{2\pi} C(t,s)\sin^2\tfrac{t-s}{2}\log\sin^2\tfrac{t-s}{2}\varphi(t){\rm d}t
\label{eq:Kkt}
+\int_0^{2\pi} D(t,s)\varphi(t){\rm d}t.
\end{eqnarray}
with
\begin{eqnarray*}
A(s,t)&=&-\frac{1}{4\pi}
J_0(k|{\bf x}(s)-{\bf x}(t)|)\\
C(s,t)&=&-\frac{k ({\bf x}(s)-{\bf x}(t))\cdot ({x}_2'(t),-{x_1'(t)})}{|{\bf x}(s)-{\bf x}(t)|^2}\frac{ J_1(k|{\bf x}(s)-{\bf x}(t)|)}{|{\bf x}(s)-{\bf x}(t)|}\:
\frac{|{\bf x}(s)-{\bf x}(t)|^2}{\sin^2\tfrac{s-t}2}\\
B(s,t)&=&\frac{{\rm i}}{4}H_0^1(k|{\bf x}(s)-{\bf x}(t)|)-A(s,t)\log\sin^2\tfrac{s-t}2 \\
D(s,t)&=&\frac{{\rm i}k}{4}H_1^1(k|{\bf x}(s)-{\bf x}(t)|)\frac{ ({\bf x}(s)-{\bf x}(t))\cdot ({x}_2'(t),-{x_1'(t)})}{|{\bf x}(s)-{\bf x}(t)|}
\\
&& 
-C(s,t)\sin^2\tfrac{s-t}2\log\sin^2\tfrac{s-t}2. 
\end{eqnarray*} 
(Observe that ${\rm K}_k$ and ${\rm K}_k^\top$ are transpose to each other). {Very well known properties of the Bessel functions imply that the functions $A,B,C,D$ are smooth functions if so is the map ${\bf x}$, as we have already assumed above.  }
%
%

Regarding the parameterized version of the hypersingular operator, the integration-by-parts like formula due to Maue \cite{Maue} (see also \cite{Nedelec}) allows to write ${\rm H}_k$ as the integro-differential operator
\begin{equation}\label{eq:Hk}
 \left[ {\rm H}_k  g \right] (s) =  \left[{\rm D} {\sf V}_k {\rm D}\,g\right](s)
-{\rm i}k^2 \left[{\sf V}_k(({\bf x}'(s)\cdot{\bf x}'(\cdot))g \right](s).
\end{equation}
Here, ${\rm D}\varphi:=\varphi'$ is simply the differentiation operator. 

\subsection{Nystr\"om Discretization}


The structure of the kernels introduced in the previous section leads to tackle, apart from the derivative operator,  the evaluation of integrals as
\begin{eqnarray}\label{eq:Psi}
\left[\Psi \varphi\right](s):=\int_0^{2\pi} \psi(s-t)a(s,t)\varphi(t)\,{\rm d}t,
\end{eqnarray}
where $a$, $\psi$ are  $2\pi-$periodic, with $a$ being smooth and $\psi$, in principle, singular at $0$.
The operators defined in equation~\eqref{eq:Psi} are $2\pi-$periodic pseudodifferential operators (cf. \cite[Ch.7]{saranen_vainiko}).

\subsubsection{Trigonometric interpolation}

Let us denote 
\[
 \mathbb{T}_N:={\rm span}\:\langle e_n\: : \: -N<n\le N\rangle,\quad \text{with}\quad
 e_n(t):=\exp({\rm i}n t),\quad (n\in\mathbb{Z})
\]
the space of trigonometric polynomials of degree $N$. On $\mathbb{T}_N$ we consider the trigonometric interpolation problem  on the uniform grid $\{j\pi/N\}$:
\[
 \mathbb{T}_N \ni P_Ng\quad  \text{s.t.}\quad  
 (P_Ng)(\tfrac{j\pi}N)=g(\tfrac{j\pi}N),\ j=0,\ldots, 2N-1,
\]
The solution of the interpolating problem is given by
\begin{equation}\label{eq:FFT}
 \sum_{n=-N+1}^N \left[\frac{1}{2N}\sum_{m=-N+1}^N g(\tfrac{j\pi}N)e_n(-\tfrac{{\rm i}m\pi}N)\right]e_n({\rm i}n t)
\end{equation}
which can be computed  in ${\cal O}(n\log n)$ operations using FFT.

\subsubsection{Discrete operators}\label{sec:3}

We now introduce 
\begin{equation}
\left[\Psi_{N}\varphi\right](s):=\int_0^{2\pi} \psi(s-t)P_N\left[a(s ,\cdot)\varphi\right](t)\,{\rm d}t\approx \left[\Psi\varphi\right](s)\label{eq:PsiN}
\end{equation}
as discrete approximations of \eqref{eq:Psi}. Clearly $\Psi_N\varphi$ depends only on the pointwise values of the density at the grid points, which justifies the use of the term ``discrete''  when referring to these operators.

Obviously, we are just working with a product-integration rule and the applicability of such procedure relies on being able to compute
\[
 \widehat{\psi}(n):=\frac{1}{2\pi}\int_{0}^{2\pi}\psi(t)e_{-n}(t)\,{\rm d}t,\quad  n\in\mathbb{Z}
\]
i.e. the Fourier coefficients of the weight function $\psi$. Fortunately, for the weight functions featured above, these Fourier coefficients can be computed explicitly. Indeed, for $\psi_1:=\log\sin^2 \tfrac t2$ we have
\begin{eqnarray*}
 \widehat{\psi}_1(n)&=&
 \tfrac1{2\pi}\int_0^{2\pi}\!\log(\sin^2 \tfrac{t}2)\:e_{-n}(t) {\rm d}t=
 \tfrac1{2\pi}\int_0^{2\pi}\!\log(\sin^2 \tfrac{t}2)\:\cos( n t) {\rm d}t\\
 &=&  \begin{cases}
                           -2\log 4,& n=0,\\
                           -2|n|^{-1},&\text{otherwise},
                          \end{cases}
                          \end{eqnarray*}
whereas for $\psi_2:=\sin^2 \tfrac t2\log\sin^2 \tfrac t2$ straightforward calculations yield
                          \begin{eqnarray*}
  \widehat{\psi}_2(n)&=&\tfrac1{2\pi}\int_0^{2\pi}\!\sin^2 \tfrac{t}2\log(\sin^2 \tfrac{t}2)\:e_{-n}(t) {\rm d}t\\
  &=&\tfrac1{8\pi}\int_0^{2\pi}  \log(\sin^2 \tfrac{t}2)\:\left(2\cos(n t)-\cos(n-1)t-\cos(n+1)t\right) {\rm d}t\\
 &=& \begin{cases}
                           \tfrac12,& n=0,\\
                           -\tfrac38,& |n|=1,\\
                           \tfrac14\big[\tfrac1{|n+1|}+\tfrac{1}{|n-1|}-\tfrac{2}{|n|}\big],&\text{otherwise}.
                          \end{cases}\\                         
\end{eqnarray*}
We stress that the calculation in the case of the weight $\psi_1$ can be traced back to \cite{kusmaul,martensen} (see also \cite{KressLI}). 
For the remaining case, $\psi_0\equiv 1$, the same approach gives us (see \eqref{eq:FFT})
\begin{eqnarray*}
 \int_0^{2\pi} (P_N g)(t)\,{\rm d}t&=&
 \sum_{n=-N+1}^N \left[\frac{1}{2N}\sum_{m=-N+1}^N g(\tfrac{j\pi}N)\exp(-\tfrac{{\rm i}mn\pi}N)\right]\int_0^{2\pi}\exp(in t)\,{\rm d}t\\
 &=&\frac{\pi}{N}\sum_{j=0}^{2N-1}
 g\big(\tfrac{j\pi}{N}\big),
\end{eqnarray*}
i.e., the trapezoidal rule. Therefore, for $\psi\equiv 1$, we simply have
\[
 \left[\Psi_{N}\varphi\right](s)=\frac{\pi}{N}\sum_{j=0}^{2N-1}
 a\big(s,\tfrac{j\pi}{N}\big)\varphi(\tfrac{j\pi}{N}\big).
\]
%
%
%
%

\subsubsection{Discrete Helmholtz Boundary Integral Operators}

For the single layer operator we work with two types of discretizations. The first one, proposed originally by Kress (cf. \cite{KressLI} and references therein) is simply
\begin{eqnarray}
{[}{\rm V}_{k,N} \varphi{]}(s)&:=&\int_0^{2\pi}  \psi_1(s-t) \left[P_N A(s,\cdot )\varphi\right](t)\,{\rm d}t +\int_0^{2\pi} [P_N B(s,\cdot)\varphi](t){\rm d}t.
\label{eq:VkN}
\end{eqnarray}
One can use the same approach   for the double layer  operator  and obtain
\begin{equation}
{[}{\rm K}_{k,N} \varphi{]}(s):=\!\int_0^{2\pi}\!\!  \psi_1(s-t)[ P_N C(s,\cdot )\sin^2\tfrac{s-\cdot} 2\:\varphi](t) {\rm d}t+\!\int_0^{2\pi} \left[P_N D(s,\cdot)\varphi\right](t){\rm d}t. \label{eq:KN}
\end{equation}
The operator ${\rm K}_{k,N}^\top$ can be defined accordingly. 

Alternatively, we can proceed in a different way and define the {\em more accurate} approximation
\begin{eqnarray}
\label{eq:KNtilde}
{[}\widetilde{\rm K}_{k,N} g{]}(s)&:=&\int_0^{2\pi}\!\! \psi_2(s-t) P_N[C(s,\cdot )g](t)\,{\rm d}t+\int_0^{2\pi} [P_N D(s,\cdot)g ](t){\rm d}t.
\end{eqnarray}
The operator $\widetilde{{\rm K}}^\top_{k,N}$ can be obviously  defined in the same manner. 

We can actually use the same approach for the single layer operator ${\rm V}_k$. Indeed, let us write first
\begin{eqnarray}
 A(s,t)&=&-\frac{1}{4\pi} +\frac{1-
J_0(k |{\bf x}(s)-{\bf x}(t)|)}{4\pi\sin^2\tfrac{s-t}{2}}
\sin^2\tfrac{s-t}{2}
=:
 -\frac1{4\pi}+\widetilde{A}(s,t)\sin^2\tfrac{s-t}{2}. \label{eq:Ast}
\end{eqnarray}
We point out that function $A(s,t)$ is smooth with
\[
 \widetilde{A}(s,s)\equiv \frac{k^2}{4\pi} |{\bf x}'(s)|.
\]
Hence, using the Bessel operator defined as
\[
 \left[\Lambda\varphi\right](s):=
 -\frac{1}{4\pi}\int_0^{2\pi }\log\sin^2\tfrac{s-t}{2}\: \varphi(t)\,{\rm d}t,
\]
we have derived the following alternative expression for the single layer operator
\begin{eqnarray}\label{eq:VktildeN1}
 {\rm V}_{k}\varphi&=&\Lambda\varphi +   \int_0^{2\pi} \widetilde{A}(\:\cdot\:,t)\psi_2(\:\cdot\:-t)\varphi(t)\,{\rm d}t\nonumber +\int_0^{2\pi} B(\:\cdot\:,t)\varphi(t)\,{\rm d}t =:\Lambda\varphi + {\rm R}_k\varphi,
\end{eqnarray}
which can be exploited to lead to the following approximation 
\begin{eqnarray}
 \big[\widetilde{\rm V}_{k,N} \varphi\big](s) &:=&  
 \big[\Lambda\varphi\big](s) +   \int_0^{2\pi}  \psi_2(s-t)\:P_N[\widetilde{A}(s,\cdot)\varphi](t)\,{\rm d}t\nonumber +\int_0^{2\pi} P_N[\widetilde{B}(s,\cdot)\varphi](t)\,{\rm d}t \\
  &=:&\big[\Lambda\varphi\big](s) + \big[\widetilde{\rm R}_{k,N}\varphi\big](s).\label{eq:VktildeN2}
\end{eqnarray}
Obviously, $\widetilde{\rm V}_{k,N}$ can be applied, in principle, only to trigonometric polynomials, since otherwise the first term gives rise to an infinite series. {As we will see later, this is not a severe constraint for the numerical approximations we propose}. 

Finally, applying integration by parts and making use of the same quadrature rules, we have
\begin{eqnarray}\label{eq:HkN1}
  {\rm H}_{k}  &=& {\rm D}\Lambda {\rm D}   +
 {\rm T}_k 
\end{eqnarray}
with
\[
\big[{\rm T}_k\varphi\big](s) =\int_0^{2\pi} E(s,t)\log\sin^2\tfrac{s-t}{2}\varphi(t)\:{\rm d}t+
\int_0^{2\pi} F(s,t)\varphi(t)\: {\rm d}t
\]
where
\begin{eqnarray*}
 E(s,t)&:=&- \partial_s\partial_t \widetilde{A}(s,t) \sin^2\tfrac{s-t}{2}+
 \tfrac12\big(\partial_s \widetilde{A}(s,t)-\partial_t \widetilde{A}(s,t)\big)\sin(s-t)\\
 &&+\tfrac12\widetilde{A}(s,t)\cos(s-t)-{\rm i}k^2
  \big({\bf x}'(s)\cdot {\bf x}'(t)\big) A(s,t)\\
 F(s,t)&:=&-\partial_s\partial_t {B}(s,t)+\tfrac12 \big(\partial_s \widetilde{A}(s,t)-\partial_t \widetilde{A}(s,t)\big)\sin(s-t)\\
 &&+\widetilde{A}(s,t)(\tfrac12+\cos(s-t))
 -{\rm i}k^2 ( {\bf x}'(s)\cdot {\bf x}'(t)) B(s,t).
 \end{eqnarray*}
 Then, following the same convention, we can define
  \begin{equation}\label{eq:HkN2}
 {\rm H}_{k,N}\varphi :=  {\rm D}\Lambda {\rm D} \varphi  + 
 {\rm T}_{k,N}\varphi
 \end{equation}
 with
 \[
 \big[
 {\rm T}_{k,N}\varphi\big](s)=
\int_0^{2\pi}  \psi_1(s-t) \left[P_N E(s,\cdot )\varphi\right](t){\rm d}t +\int_0^{2\pi} [P_N F(s,\cdot)\varphi](t){\rm d}t\\
\]
Again $ {\rm H}_{k,N}$ is not a full discrete operators, but when applied to trigonometric polynomials it can be computed exactly which turns out to be enough for our purposes. 

%
%
\subsection{Convergence analysis}

We develop our analysis in periodic Sobolev norms. 
For any $p\in\mathbb{R}$ we first define the  Sobolev norm
\[
 \|\varphi\|^2_{p}:=|\widehat{\varphi}(0)|^2+\sum_{n\ne 0} |n|^{2p}|\widehat{\varphi}(n)|^2.
\]
The periodic Sobolev spaces of order $p$, denoted in what follows by $H^p$, can be defined, for instance, as the completion of trigonometric polynomials in this norm.

We are ready to state the main theorem. The proof follows from application of similar ideas to those introduced in \cite[Ch. 12 and 13]{KressLI} (see also \cite{CVY}). Let us point out that henceforth, for given $A:X\to Y$, we denote by
$\|A\|_{X\to Y}$ its operator norm.

\begin{theorem}\label{theo:01}
Let $p>1/2$ and  $q\ge -1$ with $p+q>1/2$. Then, if 
${\rm A}\in\{{\rm K}_k,{\rm K}_k^\top, {\rm V}_k,{\rm H}_k \}$ and ${\rm A}_N$ is the corresponding approximation, i.e.,
${\rm A}_N\in\{{\rm K}_{k,N},{\rm K}_{k,N}^\top, {\rm V}_{k,N},{\rm H}_{k,N}\}$,
\begin{eqnarray}\label{eq:01:theo:01}
\|{\rm A}-{\rm A}_{N}\|_{H^{p+q}\to H^p} 
&\le& C_{p,q}N^{-q-\min\{p,1\}}.
\end{eqnarray}
On the other hand, for ${\rm A}\in\{{\rm K}_k,{\rm K}_k^\top, {\rm V}_k \}$ and
$\widetilde{\rm A}_N\in\{\widetilde{\rm K}_{k,N},\widetilde{\rm K}_{k,N}^\top, \widetilde{\rm V}_{k,N}\}$  the corresponding
discretization,  we have for  $q\ge -3$ with $p+q>1/2$. 
\begin{eqnarray}\label{eq:02:theo:01}
 \|{\rm A}-\widetilde{\rm A}_{N}\|_{H^{p+q}\to H^p}\!\!
&\le&\!\! C_{p,q} N^{-q-\min\{p,3\}}.
\end{eqnarray}
\end{theorem}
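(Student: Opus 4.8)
\emph{Plan.} The aim is to funnel all four operator differences into one \emph{convolution model} estimate and then dress it up with the smooth coefficients. The decisive first step is that the \emph{singular principal parts are discretized exactly}: in $\widetilde{\rm V}_{k,N}$ the term $\Lambda\varphi$ is the true Bessel operator and in ${\rm H}_{k,N}$ the term ${\rm D}\Lambda{\rm D}\varphi$ is exact, so they cancel identically and leave ${\rm V}_k-\widetilde{\rm V}_{k,N}={\rm R}_k-\widetilde{\rm R}_{k,N}$ and ${\rm H}_k-{\rm H}_{k,N}={\rm T}_k-{\rm T}_{k,N}$. Reading off the kernels, every difference ${\rm A}-{\rm A}_N$ (resp. ${\rm A}-\widetilde{\rm A}_N$) is then a finite combination of prototypes
\[
 [(\Psi-\Psi_N)\varphi](s)=\int_0^{2\pi}\psi(s-t)\,[(I-P_N)(a(s,\cdot)\varphi)](t)\,{\rm d}t,
\]
whose weight is $\psi_0\equiv1$, $\psi_1$, or $\psi_2$ and whose coefficient $a$ is one of the smooth functions $A,B,\widetilde A,C,D,E,F$. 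Since, by the explicit formulas computed above, $\widehat\psi_1(n)=O(|n|^{-1})$ and $\widehat\psi_2(n)=O(|n|^{-3})$, the whole theorem reduces to the single claim that, for smooth $a$ and a weight with $\widehat\psi(n)=O(|n|^{-\mu})$, one has $\|\Psi-\Psi_N\|_{H^{p+q}\to H^p}\le C_{p,q}N^{-q-\min\{p,\mu\}}$; the first bound is $\mu=1$, the second $\mu=3$.

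\emph{The convolution model ($a\equiv1$).} Here I would work purely in Fourier variables, using the interpolation aliasing identity $\widehat{(I-P_N)\varphi}(n)=-\sum_{m\ne0}\widehat\varphi(n+2Nm)$ for $-N<n\le N$ and $=\widehat\varphi(n)$ otherwise, together with the fact that convolution against $\psi$, which I write $C_\psi$, acts as multiplication by $2\pi\widehat\psi(n)$. The error coefficients are thus $2\pi\widehat\psi(n)\widehat{(I-P_N)\varphi}(n)$, and I split $\|(\Psi-\Psi_N)\varphi\|_p^2$ into the tail $|n|>N$ and the aliased block $|n|\le N$. The tail is $\sum_{|n|>N}|n|^{2p}|\widehat\psi(n)|^2|\widehat\varphi(n)|^2\le C\sum_{|n|>N}|n|^{-2q-2\mu}\,|n|^{2(p+q)}|\widehat\varphi(n)|^2\le CN^{-2q-2\mu}\|\varphi\|_{p+q}^2$, which uses $q\ge-\mu$ to make the exponent $-2q-2\mu$ nonpositive. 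For the block, a Cauchy--Schwarz split in $m$, using $p+q>1/2$ to sum $\sum_{m\ne0}|n+2Nm|^{-2(p+q)}\le CN^{-2(p+q)}$, gives the bound $CN^{-2(p+q)}\sum_{|n|\le N}|n|^{2p-2\mu}S_n$ with $S_n:=\sum_{m\ne0}|n+2Nm|^{2(p+q)}|\widehat\varphi(n+2Nm)|^2$ and $\sum_{|n|\le N}S_n\le\|\varphi\|_{p+q}^2$. The cap now appears by a two-regime estimate: when $p\le\mu$ one uses $|n|^{2p-2\mu}\le1$ to get $N^{-2(p+q)}$, and when $p>\mu$ one uses $|n|^{2p-2\mu}\le N^{2p-2\mu}$ to get $N^{-2q-2\mu}$, so in both cases the block is $\le CN^{-2q-2\min\{p,\mu\}}\|\varphi\|_{p+q}^2$. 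Combining tail and block and taking square roots yields the model estimate.

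\emph{The two-variable coefficient --- the main obstacle.} The real work is passing from $a\equiv1$ to a genuine smooth $a(s,t)$, where $\Psi$ is no longer a convolution. My plan is to expand in the integration variable, $a(s,t)=\sum_{j\in\mathbb{Z}}\alpha_j(s)\,e_j(t)$ with $\alpha_j(s):=\widehat{a(s,\cdot)}(j)$; since $a$ is smooth, $\alpha_j$ and all its $s$-derivatives decay faster than any power of $|j|$. Because $\alpha_j(s)$ is a scalar in $t$, the interpolation separates cleanly, $P_N[a(s,\cdot)\varphi]=\sum_j\alpha_j(s)\,P_N[e_j\varphi]$, so the prototype error becomes $\sum_j\alpha_j(s)\,[C_\psi(I-P_N)(e_j\varphi)](s)$, i.e. a superposition of the convolution model applied to $e_j\varphi$ and then multiplied by the smooth function $\alpha_j$. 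Each term is controlled by the model estimate together with two standard facts: multiplication by a smooth function is bounded on $H^p$ for $p>1/2$, and $\|e_j\varphi\|_{p+q}\le C(1+|j|)^{|p+q|}\|\varphi\|_{p+q}$. Summing over $j$ converges because $\|\alpha_j\|$ beats the polynomial factor $(1+|j|)^{|p+q|}$, producing $\|\Psi-\Psi_N\|_{H^{p+q}\to H^p}\le C_{p,q}N^{-q-\min\{p,\mu\}}$ with a finite constant. The delicate points I expect to fight are keeping all estimates uniform in $s$ (hence the need for $\alpha_j$ to be small together with its derivatives) and preserving the sharp cap at the endpoints $q=-\mu$ and $p+q\to\tfrac{1}{2}^{+}$.

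\emph{Smooth pieces and the hypotheses.} It remains to note that the genuinely smooth ($\psi_0$) contributions coming from $B,D,F$ are just interpolatory quadratures of smooth integrands, for which the analogous Fourier argument gives $\|\cdot\|_{H^{p+q}\to H^p}\le CN^{-(p+q)}$; since $p\ge\min\{p,\mu\}$, this is never worse than the target rate and does not degrade it. Throughout, $p>1/2$ guarantees boundedness of $P_N$ and of smooth multiplication and the meaningfulness of point values; $p+q>1/2$ is exactly the summability threshold for the aliasing series; and $q\ge-\mu$ (that is $q\ge-1$ for the first family and $q\ge-3$ for the second) keeps the target index inside the smoothing range of the weight. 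Finally, for smooth data one may let $q$ be arbitrarily large, so that $N^{-q-\min\{p,\mu\}}$ decays faster than any power of $N$, which is the advertised superalgebraic convergence.
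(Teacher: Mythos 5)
Your proposal is correct, and its skeleton coincides with the paper's own proof: both expand the smooth kernel in a Fourier series in the integration variable, $a(s,t)=\sum_j\alpha_j(s)e_j(t)$ (your $\alpha_j$ is the paper's $a_n$), reduce the error to the superposition $\sum_j\alpha_j\,\bigl[\psi_m*(I-P_N)(e_j\varphi)\bigr]$, bound each term by a convolution-model estimate, and sum in $j$ using the superalgebraic decay of $\|\alpha_j\|$ together with the algebra property of $H^p$, $p>1/2$. Where you genuinely diverge is in how the model estimate is proved. The paper assembles it from two quoted facts --- the smoothing property of the weight, $\|\psi_m*g\|_p\le C\|g\|_{p-(2m-1)}$ coming from $|\widehat{\psi}_m(n)|\le C(1+|n|)^{1-2m}$, and the Saranen--Vainikko interpolation bound $\|(I-P_N)g\|_r\le CN^{-s}\|g\|_{r+s}$ valid only for $r,s\ge 0$ --- with a case distinction: for $p\ge 2m-1$ the interpolation error is measured in $H^{p-(2m-1)}$, giving the rate $N^{-q-(2m-1)}$, while for $p\in[0,2m-1]$ it must be measured in $H^0$, giving $N^{-p-q}$; the cap $\min\{p,2m-1\}$ is exactly this dichotomy. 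You instead prove the combined bound $\|\psi*(I-P_N)g\|_p\le CN^{-q-\min\{p,\mu\}}\|g\|_{p+q}$ from scratch via the aliasing identity, with the tail/block split and the two regimes $|n|^{2p-2\mu}\le 1$ versus $|n|^{2p-2\mu}\le N^{2p-2\mu}$ appearing inside the aliased block. This buys a self-contained, unified treatment in the single parameter $\mu$ --- notably, the paper only writes out $m=0$ and $m=2$ and leaves $m=1$ ``as an exercise,'' whereas your argument covers all weights at once and makes transparent where the hypotheses $p+q>1/2$ (summability of the aliasing series) and $q\ge-\mu$ (the tail) enter --- at the cost of re-deriving the aliasing analysis that the quoted interpolation theorem encapsulates. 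Your explicit remark that the principal parts $\Lambda$ and ${\rm D}\Lambda{\rm D}$ are discretized exactly, so that ${\rm V}_k-\widetilde{\rm V}_{k,N}={\rm R}_k-\widetilde{\rm R}_{k,N}$ and ${\rm H}_k-{\rm H}_{k,N}={\rm T}_k-{\rm T}_{k,N}$, is left implicit in the paper's definitions and is worth stating as you do.
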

{\sc Proof} 
For any function $\psi$ we denote the convolution operator in the usual manner:
\[
[\psi*\varphi](s):=\int_{0}^{2\pi}\psi(s-t)\varphi(t)\,{\rm d}t={\sum_{n=-\infty}^\infty
\widehat{\psi}(n)\widehat{\varphi}(n)e_n(s)}.
\]
Then  it is straightforward to check that for $\varphi$ smooth enough, see \eqref{eq:Psi}, 
\begin{eqnarray*}
[\Psi\varphi](s)&=&\int_{0}^{2\pi} a(s,t)\psi(s-t)\varphi(t)\,{\rm d}t=
\sum_{n=-\infty}^\infty a_n(s) [\psi*( e_n \varphi)](s) 
\end{eqnarray*}
where $e_n(s)=\exp({\rm i}n s)$ and 
\[
 a_n(s):=\frac{1}{2\pi}\int_0^{2\pi} a(s,t)e_{-n}(t){\rm d}t
\]
is the $n$th Fourier coefficient of   $a(s,\cdot)$. 
Since function  $a$ is assumed to be smooth, then for any $P$ it holds that
\[
 \sup_{n\in\mathbb{N}}(1+|n|)^{P}\|a_n\|_{L^\infty(0,2\pi)}<C_P. 
\]
Let us restrict ourselves to the cases $\psi=\psi_m$, for $m=0,1,2$  (see the beginning 
of subsection \ref{sec:3}). Denote then by $\Psi_m$ the corresponding operator and by $\Psi_{m,N}$, its  numerical approximation cf. \eqref{eq:PsiN}. 
Clearly, the proof of this Theorem can be reduced to studying 
%
\[
 (\Psi_{m}-\Psi_{m,N})\varphi=\sum_{n=-\infty}^\infty a_n \psi_m*(e_n\varphi-P_N( e_n\varphi)). 
\]
We will make use of the following results:
\begin{itemize}
 \item[(a) ] 
for $m=1,2$ it holds
\[
 \|\psi_m*\varphi\|_{p}\le C_q\|\varphi\|_{p-q},\quad q\le  2m-1                                                       
\] 
whereas for $m=0$
\[
 \|\psi_0*\varphi\|_{p}= 2\pi |\widehat{\varphi}(0)|\le  2\pi \|\varphi\|_{p+q},\quad \forall q\in\mathbb{R};
\]
Indeed, for $m=1,2$
\[
| \widehat{\psi}_m(n)|\le C_m (1+|n|)^{1-2m}
\]
with $C_m$ independent of $m$ which implies
\begin{eqnarray*}
\|\psi_m*\varphi\|_p^2&=&|\widehat{\psi}_m(0)\widehat{\varphi}(0)|^2+\sum_{n=-\infty}^\infty |n|^{2p}|\widehat{\psi}(n)\widehat{\varphi}(n)|^2\\
&\le& C^2_m\left[|
\widehat{\psi}_m(0)\widehat{\varphi}(0)|^2+\sum_{n=-\infty}^\infty |n|^{2(p+1-2m)} |\widehat{\varphi}(n)|^2\right]=C^2_m\|\varphi\|^2_{p-2m+1};
\end{eqnarray*}

\item[(b) ] the convergence estimate for the trigonometric interpolant
 \cite[Theorem 8.2.1]{saranen_vainiko} 
\begin{equation}\label{eq:interp_error}
\|P_N\varphi-\varphi\|_{p}\le C_{p,q} N^{-q}\|\varphi\|_{p+q},\quad 
\forall p,q\ge 0,\quad p+q> 1/2;
\end{equation}
\item[(c) ] the fact that $H^p$ for $p> 1/2$ is an algebra, cf \cite[Lemma 5.13.1]{saranen_vainiko} and therefore
\[
 \|a\varphi\|_p\le C_p\|a\|_p\|\varphi\|_p;
\]
\item[(d) ] the obvious bound $\|e_n\|_p\le \max\{1, |n|^p$\}.
\end{itemize}
We are ready to analyze the approximation error of the discrete operators.  First, for $m=0$, that is, for integral operators with smooth kernel, we have
\begin{eqnarray*}
 \|(\Psi_{0}-\Psi_{0,N})\varphi\|_p &\le&  C \!\sum_{n=-\infty}^\infty\! \|a_n\|_p  
 \|\psi_0*(e_n\varphi-P_N(e_n \varphi))\|_{p}\\
&=& 2\pi C_p\sum_{n=-\infty}^\infty \|a_n\|_p \|e_n\varphi-P_N(e_n \varphi)\|_0\\
&\le& C_{p,q}N^{-p-q}\sum_{n=-\infty}^\infty \|a_n\|_p\|e_n\|_{p+q} \|\varphi\|_{p+q}\\
&\le& C_{p,q}N^{-p-q}\bigg[\sum_{n=-\infty}^\infty \|a_n\|_p (1+|n|)^{p+q}\bigg] \|\varphi\|_{p+q}\\
&\le& C_{p,q}' N^{-p-q}\|\varphi\|_{p+q}
\end{eqnarray*}
for all $p+q\ge 1$. 

Let us examine the case $m=2$. If $p\ge 3$, we can proceed similarly to conclude
\begin{eqnarray*}
 \|(\Psi_{2}-\Psi_{ 2,N})\varphi\|_p&\le& \sum_{n=-\infty}^\infty \|a_n\|_p 
 \|e_n\varphi-P_N(e_n\varphi)\|_{p-3}\\
   &\le&    C_{p,q}N^{ -q-3}\sum_{n=-\infty}^\infty \|a_n\|_p\|e_n\|_{{p+q}} \|\varphi\|_{p+q}
  \le   C_{p,q} N^{ -q-3}\|\varphi\|_{p+q},
\end{eqnarray*}
provided that {$p+q> 1/2$} and $q\ge -3$. 
If $p\in[0,3]$, we can only get convergence estimates for the interpolator in $H^0$ ({we can  not expect} faster convergence in weaker norms). Therefore we have instead
\begin{eqnarray*}
 \|(\Psi_{2}-\Psi_{2,N})\varphi\|_p&\le&  \sum_{n=-\infty}^\infty \|a_n\|_p 
 \|e_n\varphi-P_N(e_n\varphi)\|_{0}\le C_{p,q} N^{ -p-q}\|\varphi\|_{p+q}.
\end{eqnarray*}
Collecting these bounds, the result for $m=3$ follows readily.

Case $m=1$  is left as exercise for the reader. \hfill $\Box$

We recall the functional properties of the boundary operators in the Sobolev setting. Define
\[
 {\cal D}_k:=\begin{bmatrix}
             -{\rm K}_k&{\rm V}_k\\
              -{\rm H}_k&{\rm K}^\top_k              
             \end{bmatrix}.
\]
Then, ${\cal D}_k:H^{p+1}\times H^{p}\to  H^{p+1}\times H^{p}$ is continuous for any $p\in\mathbb{R}$. Actually  it holds 
\begin{equation}
 \label{eq:extra:reg}
{\rm K}_k,{\rm K}_k^\top,{\rm R}_k:H^{p}\to H^{p+3}.
\end{equation}
This extra regularizing property has been repeatedly used in the design and analysis  of boundary integral methods for Helmholtz equation. 

If we define 
\begin{eqnarray*}
 {\cal D}_{k,N} := \begin{bmatrix}
              -{\rm K}_{k,N}&{\rm V}_{k,N}\\[1.1ex]
              -{\rm H}_{k,N}&{\rm K}^\top_{k,N}              
             \end{bmatrix}, \quad
 \widetilde{\cal D}_{k,N} :=  \begin{bmatrix}
              -\widetilde{\rm K}_{k,N}&{\rm \widetilde{V}}_{k,N}\\[1.1ex]
              -{\rm H}_{k,N}&\widetilde{\rm K}^\top_{k,N}              
             \end{bmatrix},\quad
\end{eqnarray*}
the  following result can be easily derived from Theorem \ref{theo:01}
\begin{proposition} \label{prop:main}
For any $p>1/2$, 
\begin{equation}
\label{eq:01:prop:main} 
{\cal D}_{k,N},  \widetilde{\cal D}_{k,N} : H^{p+1}\times H^{p}\to
H^{p+1}\times H^{p}
\end{equation}
are uniformly continuous. Moreover, if $p> 1/2$ and $q\ge -1$ with $p+q>1/2$, 
\begin{equation}
 \|{\cal D}_{k,N}-{\cal D}_{k}\|_{H^{p+q}\times H^{p+q}\to
H^{p}\times H^{p}}+
\|\widetilde {\cal D}_{k,N}-{\cal D}_{k}\|_{H^{p+q}\times H^{p+q}\to
H^{p}\times H^{p}} \le C N^{-q-\min\{1,p\}},
\label{eq:02:prop:main} 
\end{equation}
and, for $q\ge -2$, $p>1/2$ and $p+q>1/2$, 
\begin{eqnarray}\label{eq:03:prop:main} 
 \|\widetilde{\cal D}_{k,N}-{\cal D}_{k}\|_{H^{p+q+1}\times H^{p+q}\to
H^{p+1}\times H^{p}} \le C {N^{-q-\min\{2,p\}}}.
\end{eqnarray}
\end{proposition}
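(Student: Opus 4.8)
\section*{Proof proposal for Proposition \ref{prop:main}}

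The plan is to exploit the block structure and reduce everything to the scalar estimates of Theorem~\ref{theo:01} applied entrywise, with a careful choice of Sobolev exponents. Writing ${\cal D}_k-{\cal D}_{k,N}$ (resp.\ ${\cal D}_k-\widetilde{\cal D}_{k,N}$) as a $2\times2$ matrix whose entries are the scalar error operators ${\rm K}_k-{\rm K}_{k,N}$, ${\rm V}_k-{\rm V}_{k,N}$, ${\rm H}_k-{\rm H}_{k,N}$, ${\rm K}_k^\top-{\rm K}_{k,N}^\top$ (and their tilde analogues), I would use the elementary fact that the norm of a block operator between product Sobolev spaces is bounded by the sum of the norms of its entries, each measured between the appropriate scalar components of the source and target. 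The whole proof then becomes bookkeeping: for each entry I specialize the exponents in Theorem~\ref{theo:01} to match the regularity of the relevant input and output slot, verify the admissibility conditions, and read off the rate.

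For \eqref{eq:02:prop:main} source and target carry equal regularity in both slots, so every entry is a map $H^{p+q}\to H^p$ and Theorem~\ref{theo:01} applies with exactly the proposition's $(p,q)$. Each of the four standard entries then obeys $C N^{-q-\min\{p,1\}}$; the tilde entries ${\rm V}_k-\widetilde{\rm V}_{k,N}$, etc.\ are even faster, so in $\widetilde{\cal D}_{k,N}$ the slowest term is the common hypersingular entry ${\rm H}_k-{\rm H}_{k,N}$, again yielding $C N^{-q-\min\{p,1\}}$. Summing the four contributions gives \eqref{eq:02:prop:main} for both ${\cal D}_{k,N}$ and $\widetilde{\cal D}_{k,N}$.

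The interesting estimate is \eqref{eq:03:prop:main}, where the compatible norm $H^{p+q+1}\times H^{p+q}\to H^{p+1}\times H^p$ shifts the exponents differently in each block. I would apply Theorem~\ref{theo:01} as follows: the $(1,1)$ entry ${\rm K}_k-\widetilde{\rm K}_{k,N}\colon H^{p+q+1}\to H^{p+1}$ with parameters $(p+1,q)$; the $(1,2)$ entry ${\rm V}_k-\widetilde{\rm V}_{k,N}\colon H^{p+q}\to H^{p+1}$ with parameters $(p+1,q-1)$; the $(2,1)$ entry ${\rm H}_k-{\rm H}_{k,N}\colon H^{p+q+1}\to H^{p}$ with parameters $(p,q+1)$; and the $(2,2)$ entry ${\rm K}_k^\top-\widetilde{\rm K}_{k,N}^\top\colon H^{p+q}\to H^p$ with parameters $(p,q)$. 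Using $\min\{p+1,3\}=\min\{p,2\}+1$, a short computation shows every resulting rate is at least $N^{-q-\min\{2,p\}}$, the bound being realized at the $(1,2)$ entry, where the tilde estimate gives exponent $-(q-1)-\min\{p+1,3\}=-q-\min\{2,p\}$. This is where the extra smoothing of the \emph{more accurate} discretizations is indispensable: with the plain ${\rm V}_{k,N}$ the same entry would only decay like $N^{-q}$, which is precisely why \eqref{eq:03:prop:main} holds for $\widetilde{\cal D}_{k,N}$ but not for ${\cal D}_{k,N}$. One must also check that the admissibility hypotheses of Theorem~\ref{theo:01} (namely $q'\ge-3$ for the tilde entries, $q'\ge-1$ for the hypersingular entry, together with $p'+q'>1/2$) are all implied by $q\ge-2$, $p>1/2$, $p+q>1/2$; the binding constraints $q-1\ge-3$ and $q+1\ge-1$ arising in the $(1,2)$ and $(2,1)$ entries explain the range $q\ge-2$.

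Finally, the uniform continuity \eqref{eq:01:prop:main} follows with little extra work. For $\widetilde{\cal D}_{k,N}$ it is immediate from \eqref{eq:03:prop:main} at $q=0$, which shows $\widetilde{\cal D}_{k,N}\to{\cal D}_k$ in the operator norm of $H^{p+1}\times H^p$. For ${\cal D}_{k,N}$ the compatible estimate is unavailable, so instead I would bound each entry of ${\cal D}_k-{\cal D}_{k,N}$ directly in $H^{p+1}\times H^p\to H^{p+1}\times H^p$; all four entries are uniformly bounded (the only non-decaying one being ${\rm V}_k-{\rm V}_{k,N}\colon H^p\to H^{p+1}$, which stays $O(1)$), so ${\cal D}_{k,N}={\cal D}_k-({\cal D}_k-{\cal D}_{k,N})$ is uniformly bounded because ${\cal D}_k$ is. The only real obstacle is the index accounting in \eqref{eq:03:prop:main}: one must track the distinct exponent shifts across the four blocks and identify the $(1,2)$ entry as the binding term, which simultaneously reveals why the rate is $\min\{2,p\}$ and why the tilde discretizations are required. \hfill $\Box$
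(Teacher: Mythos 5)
Your proposal is correct and follows essentially the same route as the paper: an entrywise application of Theorem \ref{theo:01} with exactly the same shifted parameter choices $(p+1,q)$, $(p+1,q-1)$, $(p,q+1)$, $(p,q)$ for the four blocks of \eqref{eq:03:prop:main}, the same identification of the $(1,2)$ entry as the binding term giving the rate $N^{-q-\min\{2,p\}}$, and the same accounting that yields the constraint $q\ge -2$. The only cosmetic difference is that the paper first rewrites ${\cal D}_{k,N}-{\cal D}_k$ and $\widetilde{\cal D}_{k,N}-{\cal D}_k$ through the auxiliary matrices ${\cal E}_k,{\cal E}_{k,N}$ and ${\cal F}_k,\widetilde{\cal F}_{k,N}$ (making the cancellation of $\Lambda$ and ${\rm D}\Lambda{\rm D}$ explicit and working with ${\rm T}_k$, ${\rm R}_k$), whereas you invoke Theorem \ref{theo:01} directly for ${\rm H}_k-{\rm H}_{k,N}$ and ${\rm V}_k-\widetilde{\rm V}_{k,N}$, which already encodes that cancellation.
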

\begin{proof}
Define 
\begin{equation}\label{eq:defE}
 {\cal E}_{k}:=
\begin{bmatrix}
-{\rm K}_{k}&{\rm V}_{k}\\
-{\rm T}_{k}&{\rm K}^\top _{k}
\end{bmatrix},\quad
 {\cal E}_{k,N}:=
\begin{bmatrix}          
-{\rm K}_{k,N}& {\rm V}_{k,N}\\
-{\rm T}_{k,N}& {\rm K}^\top_{k,N}
\end{bmatrix}.
\end{equation}
Then
\begin{equation}\label{eq:02bb:prop:main} 
 {\cal D}_{k,N}-{\cal D}_k= {\cal E}_{k,N}- {\cal E}_{k}.
\end{equation}
Equation  \eqref{eq:01:theo:01} in Theorem   \ref{theo:01} proves \eqref{eq:02:prop:main} since
\begin{subequations}\label{eq:est:for:Ek}
\begin{eqnarray}
 \|{\rm K}_{k,N}-{\rm K}_{k}\|_{H^{p+q}\to \times H^{p}}&\le& C N^{-q-\min\{1,p\}}\\
 \|{\rm K}^\top_{k,N}-{\rm K}^\top_{k}\|_{H^{p+q}\to H^{p}}&\le& C N^{-q-\min\{1,p\}}\\
 \| {\rm V} _{k,N}-{\rm V} _{k}\|_{H^{p+q}\to H^{p}}&\le& C N^{-q-1-\min\{1,p\}}\\
 \|{\rm T}_{k,N}-{\rm T}_{k}\|_{H^{p+q}\to H^{p}}&\le& 
 C N^{-q-\min\{1,p\}}\label{eq:d:est:for:Ek}
\end{eqnarray}
\end{subequations}
which hold for $p+q>1/2$ and $q\ge -1$. Moreover, from the mapping properties of the continuous operators, these estimates with $q=0$ imply the first result for ${\cal D}_{k,N}$. 

For the second estimate, we start now from
\begin{equation}\label{eq:03a:prop:main} 
 \widetilde{\cal D}_{k,N}-{\cal D}_k=  \widetilde{\cal F}_{k,N}- {\cal F}_{k}
\end{equation}
where
\begin{equation}\label{eq:defF}
 {\cal F}_{k}:=
\begin{bmatrix}
-{\rm K}_{k}&{\rm R}_{k}\\
-{\rm T}_{k}&{\rm K}^\top_{k}
\end{bmatrix},\quad
 \widetilde{\cal F}_{k,N}:=
\begin{bmatrix}
-\widetilde{\rm K}_{k,N}& \widetilde{\rm R}_{k,N}\\
-{\rm T}_{k,N}&\widetilde{\rm K}^\top_{k,N}
\end{bmatrix},
\end{equation}
for which we have the error convergence estimates 
\begin{subequations}\label{eq:est:for:Fk}
 \begin{eqnarray}
 \|\widetilde{\rm K}_{k,N}-{\rm K}_{k}\|_{H^{p'+q'}\to H^{p'}}&\le& C N^{-q'-\min\{3,p'\}}
 ,\quad q'\ge -3 \label{eq:a:est:for:Fk} \\
 \|\widetilde{\rm K}^\top_{k,N}-{\rm K}^\top_{k}\|_{H^{p'+q'}\to  H^{p'}}&\le& C N^{-q'-\min\{3,p'\}},\quad q'\ge -3\label{eq:b:est:for:Fk}\\
 \|\widetilde{\rm R}_{k,N}-{\rm R}_{k}\|_{H^{p'+q'}\to H^{p'}}&\le& 
 C N^{-q'-\min\{3,p'\}},\quad q'\ge -3 \label{eq:c:est:for:Fk}\\
  \|{\rm T}_{k,N}-{\rm T}_{k}\|_{H^{p'+q'}\times H^{p'} }&\le& 
 C N^{-q'-\min\{1,p'\}},\quad q'\ge -1.\label{eq:d:est:for:Fk}
\end{eqnarray}
\end{subequations}
(With the restriction $p'+q'>1/2$ in all these cases). 
Choosing   $q'=q$ and $p'=p$ in all the estimates in \eqref{eq:est:for:Fk} we get    \eqref{eq:02:prop:main} which, in particular, implies \eqref{eq:01:prop:main} as a simple consequence. To prove \eqref{eq:03:prop:main}, we take $(p',q')=(p+1,q)$ in \eqref{eq:a:est:for:Fk},
$(p',q')=(p,q)$ in \eqref{eq:b:est:for:Fk}, $(p',q')=(p+1,q-1)$ in 
\eqref{eq:c:est:for:Fk} and $(p',q')=(p,q+1)$ in \eqref{eq:d:est:for:Fk}. 
\end{proof}

In short, we have shown in this section two 
different types of discrete versions of the Helmholtz boundary layer operators.  The first type of discretization is simpler and works well for equations stated in $H^p\times H^p$ such as the equations of the second kind where the hypersingular operator is not the leading term, either because it does not appear or because the strong singular part is canceled out.  The second type of discretization involving the operators $ \widetilde{\cal D}_{k,N}$ turns out to be more appropriate for formulations in the natural space $H^{p+1}\times H^{p}$ or for complex formulations where  the operators are more involved and/or the operator ${\rm H}_k$ plays a dominant role. {Actually, we could keep ${\rm K}_{k,N}$ and ${\rm K}^\top_{k,N}$ in $\widetilde{\cal D}_{k,N}$ and  the  desired convergence property, namely  $\|\widetilde{\cal D}_{k,N}-{\cal D}_{k}\|_{H^{p}\times H^{p+1}\to
H^{p}\times H^{p+1}}\to 0$ for any $p>1/2$, still holds. We have prefered, however, to collect in $\widetilde{\cal D}_{k,N}$ the more accurate discretization. }
We will consider several examples of these cases in next section. 

\section{Boundary integral equations for transmission problems and their Nystr\"om discretizations}

We consider numerical approximations of several well-posed formulations of the transmission problem \eqref{eq:Tr} presented in Section 2. 
Equipped with the discrete operators introduced and analyzed  in the previous section, the stability and convergence of the resulting schemes can be now easily proven.

For the sake of a simpler notation, we will denote in this section only by ${\rm V}_{\pm},$
${\rm H}_{\pm},$ etc the corresponding layer operators for $k_\pm$. Their discrete versions will be denoted, as before, by simply adding the subscript $N$.

First we consider the {Kress-Roach}  formulation cf \cite{kressRoach}. Defining
\[
{\cal L}_1\!\begin{bmatrix} a\\
           \varphi
          \end{bmatrix}:=
\left(\!
 \tfrac{1+\nu}{2}{\cal I}
\!+\!\begin{bmatrix}
  \nu {\rm K}_{-}-{\rm K}_{+}& {\rm V}_{+}-{\rm V}_{-}\\
  \nu ({\rm H}_{-}-{\rm H}_{+})& \nu {\rm K}^\top_{+}-{\rm K}^\top_{-}\\
 \end{bmatrix}\!\right)\begin{bmatrix} a\\
           \varphi
          \end{bmatrix},
\]
where ${\cal I}$ is the identity operator matrix, this formulation amounts to solving the system of boundary equations 
\begin{equation}\label{eq:L1}
 {\cal L}_1\begin{bmatrix}
            a\\
            \varphi
           \end{bmatrix}=\begin{bmatrix}
           f\\
           \lambda\end{bmatrix}.
\end{equation}
It is well known that if $(f,\lambda)=(h,\nu\,\eta)$ cf. \eqref{eq:rhs}, then the unique  solution is $a= u_{\rm t}\circ{\bf x}$, 
$\varphi=|{\bf x}'|(\partial_n u_{\rm t})\circ{\bf x}$ where $u_t$ is exterior part of the total wave:
$u_{\rm t}=u_++u_{\rm inc}$.  Clearly, once this equation is solved, taking into account  the transmission conditions \eqref{eq:Tr}, we can evaluate $u^\pm$ by means of 
\eqref{eq:pot}.

The discrete versions of the operators $\mathcal{L}_{1}$ are given by
\begin{eqnarray*}
 {\cal L}_{1,N}\!\!&:=&\!
 \tfrac{1+\nu}{2}{\cal I}
+{\cal P}_N
\begin{bmatrix}
  \nu {\rm K}_{-,N}-{\rm K}_{+,N}& {\rm V}_{+,N}-{\rm V}_{-,N}\\
  \nu ({\rm H}_{-,N}-{\rm H}_{+,N})& \nu {\rm K}^\top_{+,N}-{\rm K}^\top_{-,N}
 \end{bmatrix}\! \\
 &=&\! \tfrac{1+\nu}{2}{\cal I}
+{\cal P}_N
\begin{bmatrix}
  \nu {\rm K}_{-,N}-{\rm K}_{+,N}& {\rm V}_{+,N}-{\rm V}_{-,N}\\
  \nu ({\rm T}_{-,N}-{\rm T}_{+,N})& \nu {\rm K}^\top_{+,N}-{\rm K}^\top_{-,N}
 \end{bmatrix}\! \\
\end{eqnarray*}
(recall \eqref{eq:HkN1}-\eqref{eq:HkN2})
where
\[
 {\cal P}_N=\begin{bmatrix}
             P_N\\
             &P_N
            \end{bmatrix}.
\]
Thus, the discrete problem is given by
\begin{equation}\label{eq:L1N} 
 {\cal L}_{1,N}\begin{bmatrix}
            a_N\\
            \varphi_N
           \end{bmatrix}=\begin{bmatrix}
           P_N f \\
           P_N \lambda 
           \end{bmatrix}
           \end{equation}
Observe that the last equation implies that $(a_N,\varphi_N)\in\mathbb{T}_N\times\mathbb{T}_N$ which allows us to reformulate the method as a true Nystr\"om scheme, where the unknowns are the pointwise values of the densities at the grid points $\{\tfrac{j\pi}{N}\}$. 

We will consider next  the Costabel-Stephan formulation \cite{CoSt}: Let
\begin{eqnarray*}
 {\cal L}_{2}&:=&
\begin{bmatrix}
 -({\rm K}_{-}+{\rm K}_{+})& \nu^{-1}{\rm V}_{+}+{\rm V}_{-}\\
- ({\rm H}_{-}+\nu {\rm H}_{+})&  {\rm K}^\top_{+}+{\rm K}^\top_{-}
 \end{bmatrix}\\
 &=&
            \!(1+\nu^{-1})\begin{bmatrix}
                 &\Lambda\\
                 -\nu \mathrm{D\Lambda D}                 
                \end{bmatrix}+
\begin{bmatrix}
   -  {\rm K}_{-}-  {\rm K}_{+}& {\nu}^{-1}  \widetilde{\rm R}_{+}+ \widetilde{\rm R}_{-}\\
 -({\rm T}_{-}+\nu {\rm T}_{+})&   {\rm K}^\top_{+}+ {\rm K}^\top_{-}
 \end{bmatrix}
 \end{eqnarray*}
 and the associated system of integral equations 
\begin{equation}\label{eq:L2}
 {\cal L}_2\begin{bmatrix}
 a\\
 \varphi
           \end{bmatrix}  =\begin{bmatrix}
           f \\
            \lambda\end{bmatrix}.
\end{equation}
In this case, if we take $(f,\lambda)=(h, \eta)$, then
$(a,\varphi)= \left(u_{\rm t}\circ{\bf x},|{\bf x}'|(\partial_n u_{\rm t})\circ{\bf x}\right)$ is again the exact solution.

Letting 
\begin{eqnarray*}
\widetilde{\cal L}_{2,N}\!\!&:=&\!(1+\nu^{-1})\begin{bmatrix}
                 &\Lambda\\
                 -\nu\mathrm{D\Lambda D}                 
                \end{bmatrix}+
{\cal P}_N
\begin{bmatrix}
   - \widetilde{\rm K}_{-,N}- \widetilde{\rm K}_{+,N}& {\nu}^{-1}  \widetilde{\rm R}_{+,N}+ \widetilde{\rm R}_{-,N}\\
 -({\rm T}_{-,N}+\nu {\rm T}_{+,N})&  \widetilde{\rm K}^\top_{+,N}+ \widetilde{\rm K}^\top_{-,N}
 \end{bmatrix}, 
\end{eqnarray*}
 the method we propose for solving \eqref{eq:L2} can be written in operational  form as follows
\begin{equation}\label{eq:L2N} 
 \widetilde{\cal L}_{2,N}\begin{bmatrix}
            a_N\\
            \varphi_N
           \end{bmatrix}=\begin{bmatrix}
           P_N f \\
           P_N \lambda 
           \end{bmatrix}. 
\end{equation}
As before, $(a_N,\varphi_N)\in\mathbb{T}_N\times\mathbb{T}_N$ for any pair $(f,\lambda)$ on the right hand side. (This can be easily 
seen by noticing that the leading part in $\widetilde{{\cal L}}_{2,N}$ is diagonal in the complex exponential bases).

The so-called regularized combined field integral equation, proposed in \cite{{CMV}} will be also analyzed here. Let 
\begin{eqnarray*}
{\cal L}_3=\frac{1}{\nu+1} {\cal L}_1 +\frac{2}{\nu+1}\begin{bmatrix}
      &{\rm V}_{\kappa}\\
      -\nu {\rm H}_{\kappa}
      \end{bmatrix}{\cal L}_2=
\begin{bmatrix}
 \frac{1}2I+{\rm K}_{-}&-\nu^{-1}{\rm V}_-\\
\nu {\rm H}_-& \frac{1}2I
-{\rm K}_{-}^\top&
\end{bmatrix}+{\mathcal R}_{\kappa}{\cal L}_2
\end{eqnarray*}
with 
\[
 {\cal R}_{\kappa}:= \frac{1}{\nu+1}\begin{bmatrix}
      I&2{\rm V}_{\kappa}\\
      -2\nu {\rm H}_\kappa&\nu I
      \end{bmatrix}.
      \]
The boundary integral equation  is then given by 
\begin{equation}\label{eq:L3}
 {\cal L}_3\begin{bmatrix}
 a\\
 \varphi
           \end{bmatrix}  =\mathcal{R}_{\kappa}\begin{bmatrix}
           f \\
            \lambda \end{bmatrix},\quad
\end{equation}
It can be shown (see \cite{CMV})
that this system of integral equations admits a unique solution provided that ${\kappa}$ is chosen to be a complex number with positive imaginary part. Moreover, this parameter can be adjusted to make eigenvalues cluster around $1$.  Besides, by construction if we plug 
$(h, \eta)$ in the right hand side, the unique solution is $(u_{\rm t}\circ{\bf x}
,|{\bf x}'|(\partial_n u_{\rm t})\circ{\bf x})$. In other words, this is a new direct method where ${\cal R}_k$ works as some sort of preconditioner for  ${\cal L}_2$. 

The discretization of the regularized equations is done as follows. First, we set
\[
{\mathcal R}_{\kappa,N}:=\frac{1}{\nu+1}\begin{bmatrix}
      I&2   \Lambda +P_N \widetilde{\mathrm{R}}_{\kappa,N}\\
      -2\nu \mathrm{D\Lambda D}-2\nu P_N \mathrm{T}_{\kappa,N} &\nu I
      \end{bmatrix} 
\]
and next we define
\begin{eqnarray*}
 \widetilde{\cal L}_{3,N}\!\!&:=&\!\!\!\!
\begin{bmatrix}
 \frac{1}2I+P_N \widetilde{\rm K}_{-,N}&-\nu^{-1}\Lambda -\nu^{-1} P_N \widetilde{\rm R}_{-,N}\\
\nu \mathrm{D}\Lambda\mathrm{D}+ \nu P_N {\rm T}_{-,N} & \frac{1}2I
-P_N \widetilde{\rm K}_{-,N}^\top
\end{bmatrix}+{\mathcal R}_{\kappa,N}\widetilde{\cal L}_{2,N}\\
&=& 
\begin{bmatrix}
 \frac{1}2I&-\nu^{-1}\Lambda \\
\nu \mathrm{D}\Lambda\mathrm{D} & \frac{1}2I
\end{bmatrix}+{\cal P}_N\begin{bmatrix}
   \widetilde{\rm K}_{-,N}&   \nu^{-1}\widetilde{\rm R}_{-,N}\\
 \nu {\rm T}_{-,N} & - \widetilde{\rm K}_{-,N}^\top
\end{bmatrix}+{\mathcal R}_{\kappa,N}\widetilde{\cal L}_{2,N}
\end{eqnarray*}
(Observe that the first matrix operator maps $\mathbb{T}_N\times \mathbb{T}_N$ into itself.)
The numerical algorithm, in operator form, is given by
\begin{equation}\label{eq:L3N} 
 \widetilde{\cal L}_{3,N}\begin{bmatrix}
                a_N\\
                \varphi_N
               \end{bmatrix}=
               {\cal R}_{\kappa,N}
               \begin{bmatrix}
                P_N f\\
                P_N \lambda
               \end{bmatrix}.
\end{equation}
Observe again that the right-hand-sides are trigonometric polynomials, and thus so are the solutions of these discrete problems . 
%

We also investigate an integral formulation based on an indirect method. That is, unlike the formulations considered so far, the unknown is not immediately related to traces on the boundary of the solution of the transmission problem. This integral formulation, has an interesting feature: the solution of the transmission Helmholtz problem can be reconstructed from knowledge of one boundary density only. In other words, this integral equation needs half as many unknowns as the other integral formulations considered in this paper thus far.  Let us describe this equation, which was first introduced
in \cite{KlMa}. We seek a function $\mu$ so that 
\[
u^- =-2[{\rm SL_-}\mu],\quad u^+=\nu {\rm SL}_+(I+2{\rm K}_-^\top)\mu-2{\rm DL_+}{\rm V_-}\mu
\] 
(${\rm SL_\pm}$ and ${\rm DL_\pm}$ are the corresponding parameterized layer potentials). 
The density $\mu$ can be computed by solving the boundary integral {\em equation}  
\begin{eqnarray}\label{eq:L4}
L_4 \:\mu&:=&-\frac{\nu+1}{2}\mu+\mathbf{K}\mu-{\rm i}\rho\mathbf{V}\mu= f,
\end{eqnarray}
where $f=\lambda-{\rm i \rho} g$.
Here $\rho$ is a coupling parameter which must be real and different from zero to ensure
the well-posedness of the equation. In the definition of the operator $L_4$ we used the operators
\[
\mathbf{K}:=-{\rm K}_{-}^\top(\nu I-2{\rm K}_{-}^\top)-\nu {\rm K}_{+}^\top(I+2{\rm K}_{-}^\top)+2({\rm H}_{+}-{\rm H}_{-}){\rm V}_{-}
\]
and
\[
\mathbf{V}:=-\nu {\rm V}_+(I+2{\rm K}_{-}^\top)-(I-2{\rm K}_{+}){\rm V}_{-}.
\]
The discretizations of these operators are given by
\begin{eqnarray*}
 \mathbf{K}_{N}&:=&-{\rm K}_{-,N}^\top(\nu I-2{\rm K}_{-,N}^\top)-\nu 
 {\rm K}_{+,N}^\top(I+2{\rm K}_{-,N}^\top)+2 ({\rm T}_{+,N}-{\rm T}_{-,N}){\rm V}_{-,N}\big]\\
 \mathbf{V}_{N}&:=&-\nu {\rm V}_{+,N}(I+2{\rm K}_{-,N}^\top)-(I-2{\rm K}_{+,N}){\rm V}_{-,N}.
 \end{eqnarray*}
Thus, we define
\[
 L_{4,N} :=-\tfrac{\nu+1}{2}I+P_N\mathbf{K}_N-{\rm i}\rho P_N\mathbf{V}_N.
\]
and the discretization of the equation $L_4\mu=f$ is given by
\begin{equation}\label{eq:L4N}
 L_{4,N} \mu_N=P_N f. 
\end{equation}
Again, $\mu_N\in\mathbb{T}_N$ regardless of the right hand side $f$.

\begin{theorem}\label{theo:main02}
The mappings  \eqref{eq:L1}, \eqref{eq:L2}, \eqref{eq:L3} and \eqref{eq:L4} 
\begin{equation}\label{eq:00:theo:main02}
 \begin{aligned}
  {\cal L}_1:&H^p\times H^p\to H^p\times H^p,\quad j=1,3\\
  {\cal L}_j:&H^{p+1}\times H^p\to H^{p+1}\times H^p,\quad j=1,2,3\\
  L_4:&H^{p}\to H^p
 \end{aligned}
\end{equation}
are continuous and invertible for all $p\in \mathbb{R}$. 

Moreover, for $p> 1/2$, 
\begin{subequations}
 \label{eq:01:theo:main02}
\begin{eqnarray}
 \|{\cal L}_1-{\cal L}_{1,N}\|_{H^p\times H^p\to H^p\times H^p }&\le& C_p N^{-\min\{p,1\}},\label{eq:01a:theo:main02}\\
 \|{\cal L}_2-\widetilde{\cal L}_{2,N}\|_{H^{p+1}\times H^{p}\to H^{p}\times H^{p+1} }&\le& C_p N^{-\min\{p,2\}}, \label{eq:01b:theo:main02}\\
 \|{\cal L}_3-\widetilde{\cal L}_{3,N}\|_{H^{p+1}\times H^{p}\to H^{p}\times H^{p+1} }&\le& C_p N^{-\min\{p,1\}}, \\
 \|{\cal L}_3-\widetilde{\cal L}_{3,N}\|_{H^{p+1}\times H^{p+1}\to H^{p+1}\times H^{p+1} }&\le& C_p N^{-\min\{p,1\}},\label{eq:01d:theo:main02}
 \\
 \|{ L}_4-{ L}_{4,N}\|_{H^{p}\to H^{p}}&\le& C_p N^{-\min\{p,1\}}.
\end{eqnarray}
\end{subequations}
Furthermore, we have the following convergence results: For all $p> 1/2$ and $q\ge 0$, if $(a^1,\varphi^1)$ denotes the 
exact solution for \eqref{eq:L1} and
$(a^1_N,\varphi_1^N)$ is the corresponding numerical solution of {\rm
\eqref{eq:L1N}}, 
it holds  
\begin{subequations} 
 \label{eq:02:theo:main02}
\begin{eqnarray}
 \|a -a^1_N\|_{p}+
 \|\varphi -\varphi_N^1\|_{p} &\le&  C N^{-q}\left[\|a\|_{p+q}+\|\varphi\|_{p+q} \right]\label{eq:02a:theo:main02}
\end{eqnarray}
\end{subequations}
Let for $j=2,3$  $(\widetilde{a}^j_N,\widetilde{\varphi}^j_N)$ the continuous solution of 
 \eqref{eq:L2} and \eqref{eq:L3} and    $(\widetilde{a}^j_N,\widetilde{\varphi}^j_N)$ the discrete solution of \eqref{eq:L2N} and \eqref{eq:L3N}. Then we have
\begin{subequations}
 \label{eq:03:theo:main02}
\begin{eqnarray}
  \|a-\widetilde{a}_N^2\|_{p+1}+
 \|\varphi-\widetilde{\varphi}^2_N\|_{p}&\le&C N^{ -q}\left[\|a\|_{p+q+1}+\|\varphi\|_{p+q} \right]. \label{eq:03a:theo:main02}\\
\|a-\widetilde{a}_N^3\|_{p+1}+
 \|\varphi-\widetilde{\varphi}^3_N\|_{p}&\le & C N^{-q}\left[\|f\|_{p+q+1}+\|\lambda\|_{p+q} \right].\label{eq:03b1:theo:main02}\\
\|a-\widetilde{a}_N^3\|_{p+1}+
 \|\varphi-\widetilde{\varphi}^3_N\|_{p+1}&\le & C N^{-q}\left[\|f\|_{p+q+1}+\|\lambda\|_{p+q+1} \right].\label{eq:03b:theo:main02}
\end{eqnarray}
\end{subequations}
Finally if $\mu$ is the solution of ${ L}_4$ and $\mu_N$ that given by  the numerical scheme  \eqref{eq:L4N}, 
\[
 \|\mu-\mu_N\|_p\le C N^{-q}\|\mu\|_{p+q},\quad  p>1/2,\ q\ge 0.
\]
In the estimates above, $C>0$ is independent of $a,\varphi$, $f$, $\lambda$ or $\mu$,  and $N$. 
\end{theorem}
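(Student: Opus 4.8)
The plan is to establish, for each of the four formulations, the three classical ingredients of a convergence analysis---continuity, invertibility together with discrete stability, and consistency---and then to combine them through a Strang-type argument. It is convenient to treat all four cases through the common template $\mathcal{L} = \mathcal{A} + \mathcal{M}$ and $\mathcal{L}_N = \mathcal{A} + \mathcal{P}_N\mathcal{M}_N$, where $\mathcal{A}$ is the principal part left undiscretized (a multiple of the identity for $\mathcal{L}_1$, $\mathcal{L}_3$ and $L_4$, and the anti-diagonal first-kind block built from $\Lambda$ and ${\rm D}\Lambda{\rm D}$ for $\mathcal{L}_2$), while $\mathcal{M}$ collects the remaining layer operators, discretized as $\mathcal{M}_N$. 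The continuous problem reads $\mathcal{L}u = g$ and the scheme $\mathcal{L}_N u_N = \mathcal{P}_N g$, with $u_N$ a trigonometric polynomial.

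First I would record the mapping properties. Continuity of each $\mathcal{L}_j$ and of $L_4$ on the stated scales is immediate from the continuity of $\mathcal{D}_k$ on $H^{p+1}\times H^p$ and the regularizing property \eqref{eq:extra:reg}; the latter is exactly what makes the differences ${\rm K}_+-{\rm K}_-$, ${\rm V}_+-{\rm V}_-$, ${\rm H}_+-{\rm H}_-$ (and the compositions entering $\mathbf{K}$, $\mathbf{V}$) smoothing, so that $\mathcal{M}$ is compact. Invertibility on the energy space is quoted from \cite{kressRoach}, \cite{CoSt}, \cite{CMV}, \cite{KlMa} together with the assumed unique solvability of the transmission problem. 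To upgrade invertibility to every $p\in\mathbb{R}$ I would argue that $\mathcal{L}_j=\mathcal{A}+\mathcal{M}$ is Fredholm of index zero on each scale: $\mathcal{A}$ is Fredholm there (with $\Lambda$ positive and invertible of order $-1$, and ${\rm D}\Lambda{\rm D}$ of order $+1$ with only the constants in its kernel, in the case of $\mathcal{L}_2$) and $\mathcal{M}$ is compact. Injectivity transfers across scales by elliptic regularity of the parameterised equations together with \eqref{eq:extra:reg}, so triviality of the kernel on the energy space forces it on every scale.

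Next I would prove the consistency bounds \eqref{eq:01:theo:main02} from the splitting $\mathcal{L}-\mathcal{L}_N = (\mathcal{I}-\mathcal{P}_N)\mathcal{M} + \mathcal{P}_N(\mathcal{M}-\mathcal{M}_N)$. The second summand is controlled by the uniform boundedness of the interpolation projectors $P_N$ (and $\mathcal{P}_N$) on $H^p$, $p>1/2$, together with Proposition \ref{prop:main}; the first uses the interpolation estimate \eqref{eq:interp_error} against the extra regularity of $\mathcal{M}$ furnished by \eqref{eq:extra:reg}. For $L_4$ one first telescopes the products defining $\mathbf{K}-\mathbf{K}_N$ and $\mathbf{V}-\mathbf{V}_N$, each factor being uniformly bounded and consistent, before inserting the outer $P_N$. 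The delicate point here is the shifted-norm estimate \eqref{eq:01b:theo:main02}, whose target $H^p\times H^{p+1}$ is not the natural one: there one must spend the enhanced accuracy \eqref{eq:03:prop:main} of the tilde discretizations, choosing the indices $(p',q')$ in the four building-block estimates exactly as in the proof of Proposition \ref{prop:main}.

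Finally, consistency delivers stability essentially for free: since $\|\mathcal{L}-\mathcal{L}_N\|\le C N^{-\min\{p,1\}}\to 0$ for $p>1/2$, a standard contradiction argument (if $\|u_N\|=1$ with $\mathcal{L}_N u_N\to 0$ on the finite-dimensional ranges, then $\mathcal{L}u_N\to 0$, whence $u_N\to 0$ by invertibility of $\mathcal{L}$) shows that $\mathcal{L}_N$ is invertible on the trigonometric-polynomial space for $N$ large, with $\|\mathcal{L}_N^{-1}\|$ bounded uniformly. The error then satisfies $u-u_N = \mathcal{L}_N^{-1}\big[(\mathcal{I}-\mathcal{P}_N)\mathcal{A}u + \mathcal{P}_N(\mathcal{M}-\mathcal{M}_N)u\big]$, in which the second term is higher order by Proposition \ref{prop:main}, so the rate is governed by $(\mathcal{I}-\mathcal{P}_N)\mathcal{A}u$, the interpolation error of $\mathcal{A}$ applied to the exact solution. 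Since $\mathcal{A}$ maps the solution into its own space (the identity for $\mathcal{L}_1$, $\mathcal{L}_3$, $L_4$; $\Lambda\varphi$ and ${\rm D}\Lambda{\rm D}a$ in their natural norms for $\mathcal{L}_2$), \eqref{eq:interp_error} yields precisely the $N^{-q}$ bounds \eqref{eq:02a:theo:main02}, \eqref{eq:03a:theo:main02} in terms of the solution, or, after invoking continuity of $\mathcal{L}^{-1}$, of the data $f,\lambda$. The main obstacle, in my view, is not the abstract scheme but the sharp index bookkeeping in the consistency step---particularly the shifted estimate \eqref{eq:01b:theo:main02} and the verification that the solution-interpolation term, rather than operator consistency, dictates the final rate---together with the regularity bootstrap making invertibility hold on every Sobolev scale.
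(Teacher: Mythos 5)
Your overall strategy coincides with the paper's for three of the four formulations: the consistency splitting ${\cal L}-{\cal L}_N=({\cal I}-{\cal P}_N){\cal M}+{\cal P}_N({\cal M}-{\cal M}_N)$ combined with Proposition \ref{prop:main}, \eqref{eq:interp_error} and the smoothing of the off-identity part is exactly how the paper proves \eqref{eq:01a:theo:main02} and \eqref{eq:01b:theo:main02}; uniform invertibility of ${\cal L}_N$ by perturbation of the invertible ${\cal L}$ is implicit in the paper's first inequality of the error chain; and your Strang identity, with the interpolation of the principal part applied to the exact solution dictating the $N^{-q}$ rate, reproduces the paper's bounds \eqref{eq:02a:theo:main02} and \eqref{eq:03a:theo:main02}, including the final conversion of solution norms into data norms via ${\cal L}_3^{-1}{\cal R}_\kappa$.

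For ${\cal L}_3$, however, your template breaks down in two concrete ways, and this is a genuine gap. First, the undiscretized part of $\widetilde{\cal L}_{3,N}$ is not a multiple of the identity but the matrix $\begin{bmatrix}\tfrac12 I & -\nu^{-1}\Lambda\\ \nu{\rm D}\Lambda{\rm D} & \tfrac12 I\end{bmatrix}$, and the remaining piece ${\cal R}_{\kappa,N}\widetilde{\cal L}_{2,N}$ is a \emph{product} of two partially discretized operators, each carrying its own undiscretized $\Lambda$ and ${\rm D}\Lambda{\rm D}$ blocks; it is not of the form ${\cal P}_N{\cal M}_N$ for any ${\cal M}_N$. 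The paper must therefore telescope, ${\cal L}_3-\widetilde{\cal L}_{3,N}={\cal T}_1+{\cal T}_2+{\cal T}_3$, and to control the factor ${\cal R}_{\kappa,N}({\cal L}_2-\widetilde{\cal L}_{2,N})$ it needs the auxiliary estimate \eqref{eq:usedLater}, i.e. consistency of $\widetilde{\cal L}_{2,N}$ measured in the shifted pair $H^{p+q+1}\times H^{p+q+1}\to H^{p+2}\times H^{p}$, which is not among the bounds your plan produces. Second, and more damaging to your error analysis, the scheme \eqref{eq:L3N} does \emph{not} read $\widetilde{\cal L}_{3,N}u_N={\cal P}_N g$ with $g={\cal R}_\kappa(f,\lambda)$: the right-hand side is itself discretized, $\widetilde{\cal L}_{3,N}u_N={\cal R}_{\kappa,N}({\cal P}_Nf,{\cal P}_N\lambda)$. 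Consequently your identity $u-u_N={\cal L}_N^{-1}\bigl[({\cal I}-{\cal P}_N){\cal A}u+{\cal P}_N({\cal M}-{\cal M}_N)u\bigr]$ is false for $j=3$; the missing term $({\cal R}_\kappa-{\cal R}_{\kappa,N}{\cal P}_N)(f,\lambda)$ is of the same order $N^{-q}$ as the terms you keep, so it cannot be absorbed as a higher-order correction, and it can only be bounded by data norms (not solution norms) since ${\cal R}_\kappa$ is not known to be invertible --- which is precisely why \eqref{eq:03b1:theo:main02} and \eqref{eq:03b:theo:main02} are stated in terms of $\|f\|$ and $\|\lambda\|$. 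The paper bounds this term explicitly in its last step; your proposal omits it, so as written it proves convergence of a different (non-implementable) scheme for the regularized formulation.
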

\begin{proof}
The functional properties stated in \eqref{eq:00:theo:main02} are well known and can be easily derived from the functional properties of the operators involved (see Proposition \ref{prop:main}).

 The proofs for all the convergence estimates share the same ideas. Thus, for the sake of brevity we restrict ourselves to consider a few representative cases to illustrate the kind of techniques used here.
 
 \
 
\noindent {\bf Proof of \eqref{eq:01a:theo:main02} and \eqref{eq:02a:theo:main02}.} 
Denote as in \eqref{eq:defE}
\[
{\cal E}_\pm :=
\begin{bmatrix}
-{\rm K}_\pm&{\rm V}_\pm\\
-{\rm T}_\pm&{\rm K}^\top_\pm
\end{bmatrix} 
  \]
Notice that ${\cal E}_\pm:H^{p}\times H^{p}\to H^{p+1}\times H^{p+1}$ and therefore, from 
from \eqref{eq:interp_error}, 
\begin{equation}\label{eq:00:Ek-Ekn}
 \left\|({\cal P}_N-{\cal I}){\cal E}_\pm\right\|_{H^{p+q}\times H^{p+q}\to H^{p}\times H^{p}}
                      \le C  N^{-q-1}
 \end{equation}
for any $p\ge 0$, $q\ge -1$ with $p+q>1/2$. Setting accordingly
\[
 {\cal E}_{\pm,N}:=
\begin{bmatrix}
-{\rm K}_{\pm,N}&{\rm V}_{\pm,N}\\
-{\rm T}_{\pm,N}&{\rm K}_{\pm,N}
\end{bmatrix} 
\]
we notice that cf \eqref{eq:02:prop:main} (see also \eqref{eq:02bb:prop:main})
\begin{equation}\label{eq:Ek-Ekn}
 \|{\cal E}_k-{\cal E}_{k,N}\|_{H^{p+q}\times H^{p+q}\to H^{p}\times H^p}\le C N^{-q-\min\{1,p\}}.
\end{equation}
On the other hand,
\begin{eqnarray*}
{\cal L}_{1}&=&\tfrac{1+\nu}{2} {\cal I} +
                                               \begin{bmatrix}
                                              1&\\
                                              & \nu
                                             \end{bmatrix}{\cal E}_{+}- {\cal E}_{-}\begin{bmatrix}
                                              \nu&\\
                                              & 1
                                             \end{bmatrix},
                                             \quad \\
{\cal L}_{1,N}&=&       \tfrac{1+\nu}{2} {\cal I} -
                                               {\cal P}_N\begin{bmatrix}
                                              1&\\
                                              & \nu
                                             \end{bmatrix}{\cal E}_{+,N}+ {\cal P}_N{\cal E}_{-,N}\begin{bmatrix}
                                              \nu&\\
                                              & 1
                                             \end{bmatrix}.                                      
\end{eqnarray*}
Therefore, \eqref{eq:00:Ek-Ekn} and \eqref{eq:Ek-Ekn} yield
\begin{eqnarray}
\|{\cal L}_1-{\cal L}_{1,N}\|_{H^{p+q}\times H^{p+q}\to H^p\times H^p }&&
\nonumber \\
&&\hspace{-3.0cm} \le \max\{\nu,1\}
\bigg[
\|({\cal P}_N-{\cal I}) {\cal E}_{\pm } \|_{H^{p+q}\times H^{p+q}\to H^p\times H^p }
\nonumber \\
&&\hspace{-2.7cm} +
\|{\cal P}_N\|_{H^{p}\times H^{p}\to H^p\times H^p }
\big[\|
 {\cal E}_{\pm }-{\cal E}_{\pm ,N}\|_{H^{p+q}\times H^{p+q}\to H^p\times H^p }
\big]\nonumber \\
&&\hspace{-3.0cm}\le  CN^{-q-\min\{1,p\}}.\label{eq:L1:00}
\end{eqnarray}
In particular, setting $q=0$  implies \eqref{eq:01a:theo:main02}. The error estimate for the numerical method is obtained using standard techniques:
\begin{eqnarray*}
 \left\|\begin{bmatrix}
   a -a^1_N\\
   \varphi -\varphi^1_N
   \end{bmatrix}\right\|_p&\le& C  \left\|{\cal L}_{1,N}\begin{bmatrix}
a -a^1_N\\
   \varphi -\varphi^1_N
   \end{bmatrix}\right\|_p\\
   \\
   &\le& \left\|({\cal L}_{1,N}-{\cal L}_{1})\begin{bmatrix}
a\\
   \varphi
   \end{bmatrix}\right\|_p+
   \left\|{\cal L}_{1}\begin{bmatrix}
a\\
   \varphi
   \end{bmatrix}-{\cal L}_{1,N}\begin{bmatrix}
a_N\\
   \varphi_N
   \end{bmatrix}\right\|_p \\
   &\le& \left\|({\cal L}_{1,N}-{\cal L}_{1})\begin{bmatrix}
a\\
   \varphi
   \end{bmatrix}\right\|_p+
   \left\|({\cal I}-{\cal P}_N){\cal L}_1\begin{bmatrix}
a\\
 \varphi
   \end{bmatrix}\right\|_p\\ 
  &\le& C N^{-q}\big(\|a\|_{p+q}+\|\varphi\|_{p+q}\big).\label{eq:proof:02a:theo:main02}
\end{eqnarray*}

\noindent 
{\bf Proof of \eqref{eq:01b:theo:main02} and \eqref{eq:03a:theo:main02}}
For ${\cal L}_2$, we proceed in the same fashion with 
\[                 
                   {\cal F}_{\pm}:=
\begin{bmatrix}
-{\rm K}_\pm&{\rm R}_\pm\\
-{\rm T}_\pm&{\rm K}^\top_\pm
\end{bmatrix},\quad \widetilde{\cal F}_{\pm,N}:=
\begin{bmatrix}
-\widetilde{\rm K}_{\pm,N}&\widetilde{\rm R}_{\pm,N}\\
-{\rm T}_{\pm,N}& \widetilde{\rm K}^\top_{\pm,N}.
\end{bmatrix}    
  \]
  which allows us to write
  \begin{eqnarray*}
   {\cal L}_2&=&(1+\nu^{-1})\begin{bmatrix}
                 &\Lambda\\
                 -\nu\,\mathrm{D\Lambda D}                 
                \end{bmatrix}+\begin{bmatrix}
                               \nu^{-1/2}&\\
         &
         \nu^{1/2}
                           \end{bmatrix}{\cal F}_{+}
\begin{bmatrix}
         \nu^{1/2}&\\
         &
         \nu^{-1/2}
                           \end{bmatrix}
+{\cal F}_{-},\\
\widetilde{\cal L}_{2,N}
                             &=&(1+\nu^{-1})\begin{bmatrix}
                 &\Lambda\\
                 -\nu\,\mathrm{D\Lambda D}                 
                \end{bmatrix}+
                \begin{bmatrix}
                               \nu^{-1/2}&\\
         &
         \nu^{1/2}
                           \end{bmatrix}{\cal F}_{+,N}
\begin{bmatrix}
         \nu^{1/2}&\\
         &
         \nu^{-1/2}
                           \end{bmatrix}
+{\cal F}_{-,N}.
  \end{eqnarray*}

Since ${\cal F}_\pm :H^{p+1}\times H^{p}\to H^{p+3}\times H^{p+2}$ holds as well, estimate 
\eqref{eq:interp_error} yields 
\[
 \left\|({\cal P}_N-{\cal I}){\cal F}_\pm \right\|_{H^{p+q+1}\times H^{p+q}\to H^{p+1}\times H^{p}}
                      \le C N^{-q-2}
 \]
for any $p\ge 0$ and $q\ge -1$. On the other hand, from \eqref{eq:03:prop:main}  (see
 also \eqref{eq:03a:prop:main}), 
\[
\|{\cal F}_k-\widetilde{\cal F}_{k,N}\|_{H^{p+q+1}\times H^{p+q}\to H^{p+1}\times H^p}\le C N^{-q-\min\{2,p\}}
\]
for any $p>1/2$ and $q\ge -2$  with $p+q>1/2$.

Thus
\begin{eqnarray}
\|{\cal L}_2-\widetilde{\cal L}_{2,N}\|_{H^{p+q+1}\times H^{p+q}\to H^{p+1}\times H^p }&&    \nonumber
\\
&&\hspace{-3.5cm}\le \max\{\nu,1\}
\bigg[
\|({\cal P}_N-{\cal I}) {\cal F}_{\pm } \|_{H^{p+q+1}\times H^{p+q}\to H^{p+1}\times H^p }
 \nonumber \\
&&\hspace{-3.2cm} +
\|{\cal P}_N\|_{H^{p+1}\times H^{p}\to H^{p+1}\times H^p } \nonumber
\big[\|
 {\cal F}_{\pm }-\widetilde{\cal F}_{\pm ,N}\|_{H^{p+q+1}\times H^{p+q}\to H^{p+1}\times H^p }
\big] \nonumber \\
&&\hspace{-3.5cm}\le  CN^{-q-\min\{2,p\}}\label{eq:L2:00}
\end{eqnarray}
which, with $q=0$, implies in particular \eqref{eq:01b:theo:main02}. Estimate \eqref{eq:03a:theo:main02} is proved from \eqref{eq:L2:00} as  in \eqref{eq:02a:theo:main02}.

\noindent{\bf Proof of \eqref{eq:01d:theo:main02} and \eqref{eq:03b:theo:main02}}. 
Notice first that ${\cal F}_{\pm}:H^{p}\times H^{p}\to H^{p+3}\times H^{p+1}$ is continuous and  
\[
\|
 {\cal F}_{\pm }-\widetilde{\cal F}_{\pm ,N}\|_{H^{p+q}\times H^{p+q}\to H^{p+2}\times H^p }\le C N^{-q-\min\{p,1\}},\quad p,p+q>1/2, \ q\ge -1
\]
which can be deduced from equations \eqref{eq:a:est:for:Fk} and \eqref{eq:c:est:for:Fk} with $p'=p+2$ and $q'=q-2$ and from equations \eqref{eq:b:est:for:Fk} and \eqref{eq:d:est:for:Fk}
with $p'=p$ and $q'=q$. Thus, similar arguments as those used above for ${\cal L}_2$ can be applied to show a different estimate: 
\begin{eqnarray}
\|{\cal L}_2-\widetilde{\cal L}_{2,N}\|_{H^{p+q+1}\times H^{p+q+1}\to H^{p+2}\times H^p }&& \nonumber
\\
&&\hspace{-3.5cm}
\le C \Big[
\|({\cal P}_N-{\cal I}) {\cal F}_{\pm } \|_{H^{p+q+1}\times H^{p+q+1}\to H^{p+2}\times H^p }
\nonumber\\
&&\hspace{-3.2cm} +
\|{\cal P}_N\|_{H^{p+2}\times H^{p}\to H^{p+2}\times H^p }
\big[\|
 {\cal F}_{\pm }-\widetilde{\cal F}_{\pm ,N}\|_{H^{p+q}\times H^{p+q}\to H^{p+2}\times H^p }
\Big]\nonumber\\
&&\hspace{-3.5cm}\le  C' N^{-q-2} \| {\cal F}_{\pm } \|_{H^{p+q+1}\times H^{p+q+1}\to H^{p+q+4}\times H^{p+q+2}}+C'  N^{-q-\min\{p,1\}}\nonumber \\
&&\hspace{-3.5cm}\le 
C''N^{-q-\min\{p,1\}} \label{eq:usedLater}
\end{eqnarray}
which holds for  $p>1/2$, $q\ge -1$ and $p+q>1/2$.

We are now ready to start analyzing the more complex formulation of this paper, namely ${\cal L}_3$ and the corresponding discretization given by $\widetilde{\cal L}_{3,N}$.  
Clearly,
\begin{eqnarray}
 {\cal L}_3-\widetilde{\cal L}_{3,N}&=&\frac{1}{\nu+1} ({\cal L}_1- \widetilde{\cal L}_{1,N})+
 \frac{2}{\nu+1}\begin{bmatrix}
      &{\rm R}_{\kappa}-\widetilde{\rm R}_{\kappa,N}\nonumber\\
      -\nu ({\rm T}_{\kappa}-{\rm T}_{\kappa,N})
      \end{bmatrix}{\cal L}_2
\\
  &&
      +\frac{2}{\nu+1}\begin{bmatrix}
      &{\rm V}_{\kappa,N}\nonumber\\
      -\nu {\rm H}_{\kappa,N}
      \end{bmatrix}({\cal L}_2-\widetilde{\cal L}_{2,N})\\
      &=:&{\cal T}_1+{\cal T}_2+{\cal T}_3.\label{eq:L3:02}
           \end{eqnarray}
First term with $\widetilde{\cal L}_{1,N}$ defined as ${\cal L}_{1,N}$ with $\widetilde{\rm V}_{\pm,N}$,$\widetilde{\rm K}_{\pm,N}$ and $\widetilde{\rm K}^\top_{\pm,N}$ instead,    can be analyzed as in \eqref{eq:L1:00} to get
\begin{equation}\label{eq:L3:025}
 \|{\cal T}_1\|_{H^{p+q+1}\times H^{p+q+1}\to H^{p+1}\times H^{p+1}}\le 
 CN^{-1}.
\end{equation}
For the second term we emphasize that
\begin{eqnarray}
\|{\cal T}_2\|_{H^{p+q+1}\times H^{p+q+1}\to H^{p+1}\times H^{p+1}}&&\nonumber\\
&& \hspace{-2cm} \le \:
CN^{-q-\min\{p,1\}}\|{\cal L}_2\|_{H^{p+q+1}\times H^{p+q+1}\to H^{p+q+1}\times H^{p+q}}.\qquad\label{eq:L3:03}
\end{eqnarray}
(We have applied \eqref{eq:c:est:for:Fk} with $p'=p+1$ and $q'=q-1$ and \eqref{eq:d:est:for:Fk}
with $p'=p+1$ and $q'=q$ and the mapping properties of ${\cal L}_2$). 

Regarding the third term, using \eqref{eq:usedLater} we get
\begin{eqnarray}
 \|{\cal T}_3\|_{H^{p+q+1}\times H^{p+q+1}\to H^{p+1}\times H^{p+1}}&\le&\nonumber
C\|{\cal L}_2-\widetilde{\cal L}_{2,N}\|_{H^{p+q+1}\times H^{p+q+1}\to   H^{p+2}\times H^{p}}
\\
&\le& C  N^{-q-\min\{p,1\}}.\label{eq:L3:04}
\end{eqnarray}
Gathering \eqref{eq:L3:025},   \eqref{eq:L3:03} and \eqref{eq:L3:04}   in \eqref{eq:L3:02}
we obtain
\begin{equation}\label{eq:L3:error}
\|{\cal L}_3-\widetilde{\cal L}_{3,N}\|_{H^{p+q+1}\times H^{p+q+1}\to H^{p+1}\times H^{p+1}}\le C N^{-q-\min\{p,1\}}
 \end{equation}
 which implies \eqref{eq:01d:theo:main02} by taking $q=0$.

To prove \eqref{eq:03b:theo:main02}, we can easily see that, as in \eqref{eq:proof:02a:theo:main02},  we simply have to bound
\[
  \left\|({\cal L}_{3}-\widetilde{\cal L}_{3,N})\begin{bmatrix}
a\\
   \varphi
   \end{bmatrix}\right\|_{p+1},\quad 
   \left\|\big({\cal P}_N{\cal R}_{\kappa,N}-{\cal R}_{\kappa}\big)\begin{bmatrix}
f\\
 \lambda\end{bmatrix}  \right\|_{p+1}
\]
The first term has been already studied in \eqref{eq:L3:error}. Regarding the second term, we have
\begin{eqnarray*}
 \left\|\big({\cal P}_N{\cal R}_{\kappa,N}-{\cal R}_{\kappa}\big)\begin{bmatrix}
f\\
 \lambda\end{bmatrix}  \right\|_{p+1}&&\\
 &&\hspace{-3.2cm}\le C\big[ \|{\cal R}_{\kappa,N}-{\cal R}_{\kappa}\|_{H^{p+q+1}\times H^{p+q+1}\to H^{p+1}\times H^{p+1}}\\
 &&\hspace{-2.7cm} 
 +\|({\cal P}_N-{\cal I}) {\cal R}_{\kappa}\|_{H^{p+q+1}\times H^{p+q+1}\to H^{p+1}\times H^{p+1}}\big]\big[\|f\|_{p+q+1}+\|\lambda\|_{p+q+1} \big]\\
  &&\hspace{-3.2cm}\le C N^{-q}\big[ \|f\|_{p+q+1}+\|\lambda\|_{p+q+1}\big].
\end{eqnarray*}
Notice that, unlike \eqref{eq:02a:theo:main02}, $ \|f\|_{p+q+1}$, $\|\lambda\|_{p+q+1}$  cannot be bounded  in terms of $\|a\|_{p+q+1}$ and $\|b\|_{p+q+1}$ because we cannot guarantee  that ${\cal R}_\kappa$  is invertible. However, it follows that
\begin{eqnarray*}
 \|a\|_{p+q+1}+\|b\|_{p+q+1}&&\\
 && \hspace{-2cm}\le\|{\cal L}^{-1}_{3}{\cal R}_{\kappa }\|_{H^{p+q+1}\times H^{p+q+1}\to
 H^{p+q+1}\times H^{p+q+1}}\big[\|f\|_{p+q+1}+\lambda\|_{p+q+1}\big]
\end{eqnarray*}
which allows us to write the convergence in terms of the regularity of the right-hand-side instead.

\end{proof}
The main point of this theorem is that convergence in higher Sobolev space norms of the Helmholtz boundary operators  allows to prove easily the stability and convergence of the Nystr\"om discretizations. The higher order discretizations $\{\widetilde{\rm V}_{k,N},$ $ \widetilde{\rm R}_{K,N},\widetilde{\rm R}^\top_{K,N}\}$ guarantee convergence of Nystr\"om discretizations for rather complex formulations whereas the simpler, but less accurate discretizations of second kind integral formulations such as those based on the operators ${\cal L}_1$ still converge. The analysis based on the results of Theorem \ref{theo:01}, whose details are a bit more subtle,  allows us to employ optimal discretizations and norms in which the stability and convergence results hold. Observe that on account of Sobolev embedding theorems, all of the convergence results established above imply convergence in the $L^\infty$ norm.


\section{Numerical experiments}
\begin{figure}[tb]
\includegraphics[width=.45\textwidth]{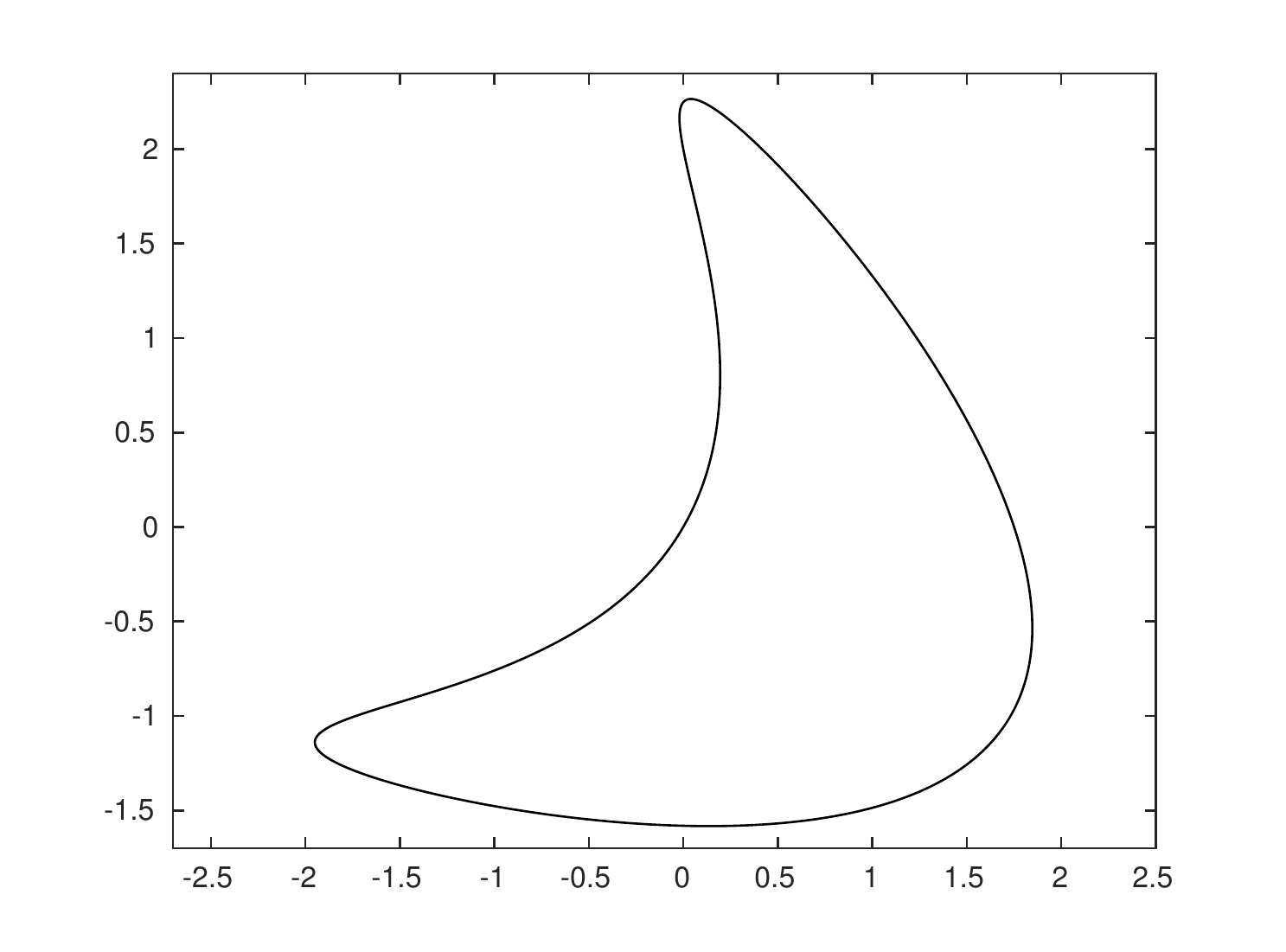}\quad
\includegraphics[width=.45\textwidth]{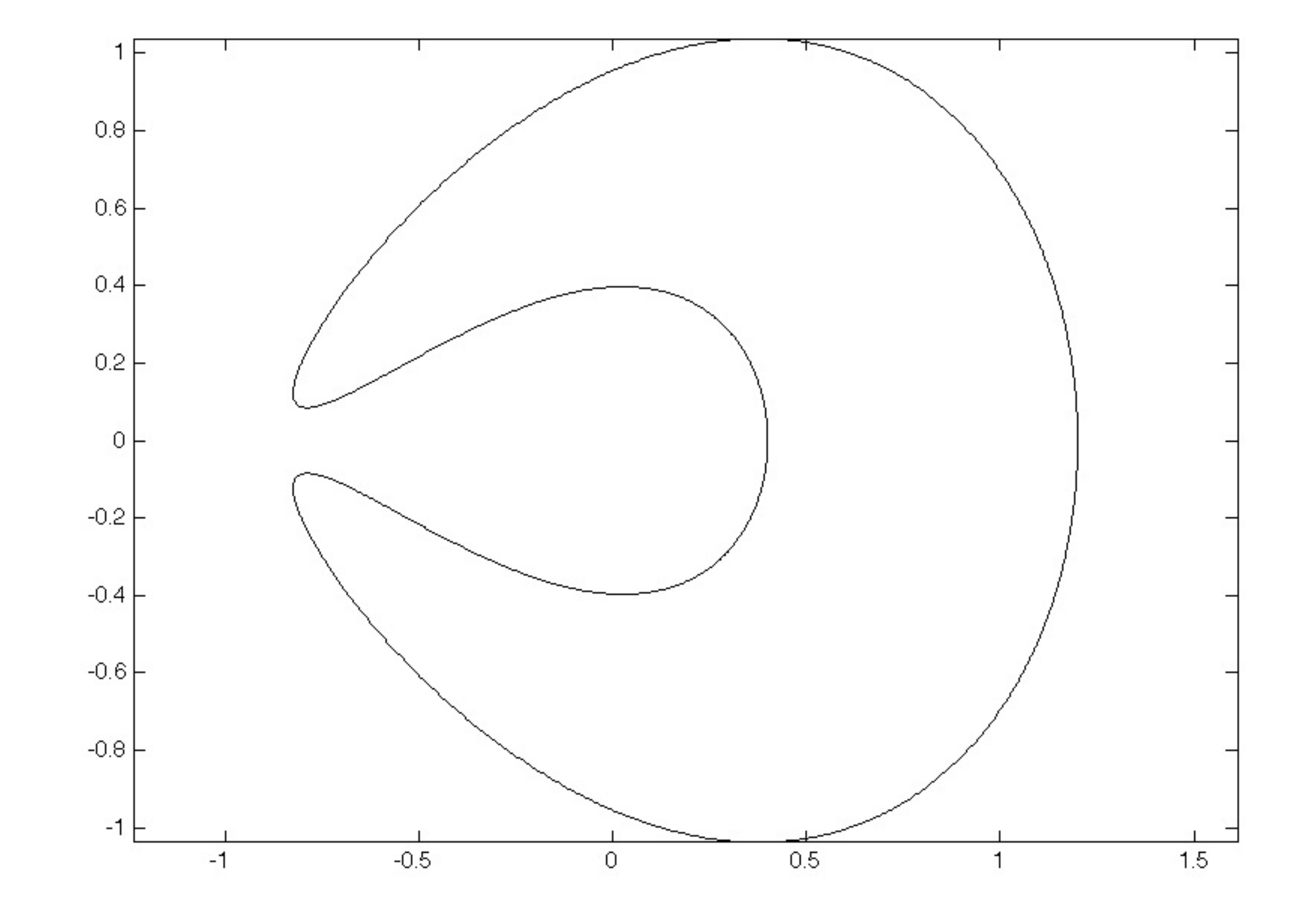}
%
%
\label{fig:cavity}
\caption{Kite and cavity geometries considered in the numerical experiments } 
\end{figure}

For brevity,  we only present numerical results for the Costabel-Stephan formulation ${\cal L}_2$. We refer the reader to 
\cite{CVY,CMV} for extensive numerical results for the other formulations.

\begin{table}[h]
\[
\begin{array}{c|c|c|c|c}
  & \multicolumn{2}{c|}{\rm Kite} & \multicolumn{2}{c}{\rm Cavity} \\
 \hline&&&\\[-2ex]
 N   & {{\cal L}}_{2,N}      &    \widetilde{{\cal L}}_{2,N}
  & {{\cal L}}_{2,N}      &    \widetilde{{\cal L}}_{2,N}\\[1ex]
 \hline&&&\\[-2ex]
96   &    3.2{\rm E}-02      &         9.1{\rm E}-03   &   7.1{\rm E}-02     &        1.7{\rm E}-02    \\[1ex]
128  &    4.1{\rm E}-04      &         2.5{\rm E}-05   & 
   8.3{\rm E}-04     &        6.3{\rm E}-05  \\[1ex]
160  &    5.9{\rm E}-11      &          5.8{\rm E}-12 &
2.0{\rm E}-10     &        3.3{\rm E}-11
\end{array}
\]
\caption{\label{tab:kite} $L^\infty$ error estimate in the far field for the discretizations ${{\cal L}}_{2,N}$ and $\widetilde{{\cal L}}_{2,N}$ for the Helmholtz transmission problem in the kite (left) and cavity (right) domains.} 
\end{table}


The domains we have considered are the geometries depicted in Figure \ref{fig:cavity}. 
We have taken $k_+ = 8, k_-=32$ in the Helmholtz transmission problems \eqref{eq:Tr}, with $\nu=1$ in the transmission conditions across the interface. We have applied the numerical schemes $\widetilde{\cal L}_{2,N}$ and ${\cal L}_{2,N}$. The latter   scheme is that defined using ${\rm V}_{\pm,N}$, the less accurate approximation for ${\rm V}_\pm$. We point out that only the first discretization has been analyzed in this paper.

The $L^\infty$ error estimate  in the far field for the numerical solutions is shown in Table \ref{tab:kite}.  
 The {\em exact} solution has been computed using ${\cal L}_{1,N}$ for sufficiently large  $N$, which, in turns, provides an indirect demonstration of the performance of this discretization too.

Both  methods converge superalgebraically to the exact solution, although $\widetilde{\cal L}_{2,N}$ performs better with even a slightly faster convergence. Convergence, and specially  stability  of ${\cal L}_{2,N}$ remains as an open problem and certainly will deserve more research in the future. 

\subsection*{Acknowledgments}
Catalin Turc gratefully acknowledge support from NSF through contract DMS-1312169. V\'{\i}ctor
Dom\'{\i}nguez is partially supported by Ministerio de Econom\'{\i}a y Competitividad, through the grant
MTM2014-52859.

\end{document}